\documentclass{article}

\usepackage[utf8]{inputenc}
\usepackage{amsthm,amsmath,amsfonts,amssymb}
\usepackage{kotex}
\usepackage{tikz-cd}
\usepackage{comment}
\usepackage{MnSymbol}
\usepackage{graphicx}

\newcommand{\iif}{\text{if }}
\newcommand{\Mod}[1]{\ (\text{mod}\ #1)}

\newtheorem{lemma}{Lemma}[section]
\newtheorem{theorem}{Theorem}[section]

\newtheorem{prop}{Proposition}[section]

\newtheorem{definition}{Definition}[section]
\newtheorem{corollary}{Corollary}[section]
\newcommand{\Mm}{\mathcal{M}}
\newcommand{\Ss}{\mathcal{S}}
\newcommand{\FE}{\mathrm{FE}}
\newcommand{\zz}{\mathbb{Z}}

\newcommand{\cc}{\mathbb{C}}

\newcommand{\hh}{\mathcal{H}}
\newcommand{\rr}{\mathbb{R}}
\newcommand{\qq}{\mathbb{Q}}

\newcommand{\SL}{\mathrm{SL}}
\newcommand{\GL}{\mathrm{GL}}

\newcommand{\QQ}{\mathcal{Q}}

\newcommand{\smat}[4]{\left(\begin{smallmatrix} #1 & #2 \\ #3 & #4 \end{smallmatrix}\right)}
\newcommand{\pmat}[4]{\begin{pmatrix} #1 &#2 \\ #3 & #4\end{pmatrix}}

\title{Maass wave forms, quantum modular forms and Hecke operators}
\author{Seewoo Lee}

\begin{document}

\maketitle

\begin{abstract}
We prove that Cohen's Maass wave form and Li-Ngo-Rhoades' Maass wave form  are Hecke eigenforms with respect to certain Hecke operators. 
As a corollary, we find new identities of the $p$-th coefficients of these Maass wave forms in terms of $p$-th root of unity. 
\end{abstract}

\section{Introduction}

In his ``Lost'' Notebook, Ramanujan suggested a lot of surprising formulas without any proofs, which were proved later by other mathematicians. In \cite{an86q}, Andrew conjectured that the coefficients of following $q$-series in the notebook
\begin{align*}
\sigma(q) &= 1+\sum_{n\geq 1} \frac{q^{n(n+1)/2}}{(1+q)(1+q^{2})\cdots (1+q^{n})} = \sum_{n\geq 0} S(n)q^{n} \\
&=1+q-q^{2}+2q^{3}+\cdots + 4q^{45}+\cdots +6q^{1609}+\cdots
\end{align*}
 satisfy 
 \begin{enumerate}
 \item $\lim\sup_{n\to \infty} |S(n)|=\infty$, 
 \item $S(n)=0$ for infinitely many $n$. 
 \end{enumerate}
These remained conjectures until Andrews, Dyson and Hickerson proved this  in  \cite{an88}. The authors proved an identity $S(n) = T_{\mathrm{C}}(24n+1)$,  where $T_{\mathrm{C}}(n)$s are related to the arithmetic of $\mathbb{Q}(\sqrt{6})$, and proved the  conjecture by using the identity. 

In \cite{co88}, using their results, Cohen found connections between the $q$-series and certain Maass wave form. He observed that the coefficients $T_{\mathrm{C}}(n)$ can be interpreted as sum of  Hecke characters on $\mathbb{Q}(\sqrt{6})$, and proved that the complex valued smooth function $u_{\mathrm{C}}(z)$ on the complex upper half plane $\mathcal{H}$  defined as
$$
u_{\mathrm{C}}(z) = \sqrt{y} \sum_{n\equiv 1\Mod{24}} T_{\mathrm{C}}(n) K_{0}\left(\frac{2\pi |n| y}{24}\right) e^{2\pi i n x/24}, \quad z=x+iy\in \mathcal{H}
$$
is a Maass wave form on $\varGamma_{0}(2)$ with an eigenvalue $1/4$ and a nontrivial multiplier system $\nu_{\mathrm{C}}$. (Here $\mathcal{H} = \{ z\in \mathbb{C}\,:\, z = x+iy, \, y > 0\}$ is the complex upper half plane and $K_{0}(y)$ is the $K$-bessel function defined in the following section. The definition of $\nu_{\mathrm{C}}$ is given in the Theorem \ref{cothm}.) 
In his paper, he mentioned that $u_{\mathrm{C}}(z)$ will be a Hecke eigenform with respect to certain Hecke operators, since the coefficients $T_{\mathrm{C}}(n)$ are multiplicative. 
However, we can not just apply the ordinary Hecke operator since $u_{\mathrm{C}}(z)$ has a nontrivial multiplier system.

In \cite{za10}, Zagier defined \emph{quantum modular forms} which are complex valued functions defined on $\mathbb{Q}$ such that their failure to be modular  can be extended as  analytic functions on $\mathbb{R}$, except finitely many points. He gave several examples of quantum modular forms, and in particular, he proved that there exists a quantum modular form $f_{\mathrm{C}}(x)$ associated with Cohen's Maass wave form, using the theory of period functions of Maass wave forms developed by Lewis-Zagier in \cite{le01}.

In a previous paper \cite{le18}, we defined Hecke operator on the space of quantum modular forms with a nontrivial multiplier system, which was previously defined by Wohlfahrt in \cite{wo57} in case of modular forms. 
This Hecke operator has different domain and codomain. In particular, when the operator acts on certain modular forms, it changes the multiplier system. Also, Str\"omberg defined such Hecke operators on the space of Maass wave forms in his Ph.D. thesis \cite{st05} using Wohlfahrt's idea. (He studied computational aspects about Maass wave forms. Also, he concentrated on the special multiplier systems, eta and theta multiplier systems. For details, see \cite{st07}, \cite{st08}.) 

 Using the explicit formula of $T_{\mathrm{C}}(n)$, we prove that the Cohen's Maass wave form $u_{\mathrm{C}}(z)$ is \emph{indeed}  a Hecke eigenform with respect to the Hecke operators defined in  \cite{le18}. 
We also prove that the two kinds of Hecke operators are compatible, i.e. the map attaching quantum modular forms to Maass wave forms is Hecke-equivariant. 
As a corollary, Zagier's quantum modular form $f_{\mathrm{C}}(x)$ is also a Hecke eigenform, which gives us new identities of $T_{\mathrm{C}}(p)$ in terms of $p$-th root of unity:
$$
\pm T_{\mathrm{C}}(\pm p) = (-1)^{\frac{p^{2}-1}{24}}+\frac{1}{2p} \sum_{j=0}^{p-1}\sum_{n=0}^{p-1} (-1)^{n}\zeta_{p}^{(n+1-\frac{p^{2}-1}{24})j}(1-\zeta_{p}^{j})\cdots(1-\zeta_{p}^{nj})
$$
for any prime $p\geq 5$ with $p\equiv \pm 1\Mod{6}$. (See the Corollary 3.3.)

In \cite{co04}, Corson, Favero, Liesinger and Zubairy find another $q$-series $W_{1}(q)$ and $W_{2}(q)$ which satisfy similar properties as $\sigma(q)$ and related to the arithmetic of $\mathbb{Q}(\sqrt{2})$. Using this $q$-series, Li, Ngo and Rhoades defined a corresponding Maass wave form $u_{\mathrm{L}}(z)$ on $\varGamma_{0}(4)$ with an eigenvalue $1/4$ and a nontrivial multiplier system $\nu_{\mathrm{L}}$ which is not a cusp form, and attached a quantum modular form $f_{\mathrm{L}}(x)$ which is defined only on a certain dense subset of $\mathbb{Q}$ due to non-cuspidality of $u_{\mathrm{L}}$ at some cusp. 
We apply our previous arguments for these examples, and obtain new identities for the $p$-th coefficients of Corson-Favero-Liesinger-Zaubairy's $q$-series $T_{\mathrm{L}}(p)$ in terms of $p$-th root of unity.


\emph{Acknowledgement}. This is part of the author's M.S. thesis paper.  
The author is grateful to his advisor  Y. Choie for her helpful advice. 
The author is also grateful to J. Lovejoy for his comments via email.

\section{Preliminaries}

\subsection{$q$-Series and Maass wave forms}

\subsubsection{Cohen's Maass wave form}

We do not have any explicit examples of Maass cusp forms on $\varGamma(1)=\mathrm{SL}_{2}(\mathbb{Z})$, although the existence of such a form is guaranteed by the Selberg trace formula (see \cite{he06}). However, there are some known explicit examples of Maass cusp forms on certain congruence subgroups. Here, we introduce Cohen's example in \cite{co88}, which comes from certain Ramanujan $q$-series in his notebook.

In \cite{an88}, the authors proved Andrew's conjecture by showing that  the $S(n)$, the $n$th coefficient of $\sigma(q)$, is related to the arithmetic of the real quadratic field $\qq(\sqrt{6})$. 
To introduce this, we first define $T_{\mathrm{C}}(m)$ for $m\equiv 1\Mod{6}$ to be the excess of the number of inequivalent solutions of the Pell's equation $u^{2}-6v^{2}=m$ with $u+3v\equiv \pm 1\Mod{12}$ over the number of them with $u+3v\equiv \pm 5\Mod{12}$. Note that the two solutions $(u, v)$ and $(u', v')$ are equivalent if $u'+v'\sqrt{6} = \pm(5+2\sqrt{6})^{r}(u+v\sqrt{6})$ for some $r\in \zz$.

\begin{theorem}[Andrew-Dyson-Hickerson, \cite{an88}]
\label{adh}
We have
$
S(n) = T_{\mathrm{C}}(24n+1)
$
for all $n\geq 0$ and a $q$-series identity
$$
\sigma(q) = \sum_{\substack{n\geq 0 \\ |j|\leq n}} (-1)^{n+j}q^{n(3n+1)/2-j^{2}}(1-q^{2n+1}).
$$
 Also, the sequence $T_{\mathrm{C}}(m)$ is completely multiplicative, i.e. for any two coprime integer $m$ and $n$, $T_{\mathrm{C}}(mn)=T_{\mathrm{C}}(m)T_{\mathrm{C}}(n)$ holds. Finally, we have an explicit formula for $T_{\mathrm{C}}(p^{e})$ as
\begin{align}
\label{TCform}
T_{\mathrm{C}}(p^{e}) = \begin{cases} 0 & \iif p\not\equiv 1\Mod{24}, \,\, e\equiv 1\Mod{2} \\
1 & \iif p\equiv 13 \text{ or }19\Mod{24}, \,\, e\equiv 0 \Mod{2} \\
(-1)^{e/2} & \iif p\equiv 7 \Mod{24}, \,\, e\equiv 0 \Mod{2} \\
e+1 & \iif p\equiv 1 \Mod{24}, \,\, T_{\mathrm{C}}(p) = 2 \\
(-1)^{e}(e+1) & \iif p\equiv 1\Mod{24},\,\, T_{\mathrm{C}}(p) =-2
\end{cases}
\end{align}
where $p$ is a prime $\equiv 1\Mod{6}$ or $p = -p'$ for some prime $p'\equiv 5\Mod{6}$.
\end{theorem}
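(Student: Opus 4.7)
The plan is to establish the three assertions in turn, following the broad strategy of Andrews--Dyson--Hickerson: first derive the Hecke-type double-sum representation of $\sigma(q)$, then translate the coefficients into a counting function for the Pell equation $u^2-6v^2 = 24n+1$, and finally interpret $T_{\mathrm{C}}$ as a sum of Hecke characters on $\qq(\sqrt{6})$ to read off multiplicativity and the prime-power values.

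For the $q$-series identity I would start from the defining sum
\[
\sigma(q) = 1+\sum_{n\geq 1}\frac{q^{n(n+1)/2}}{(-q;q)_{n}},
\]
and manipulate it via a Bailey-pair argument (or, equivalently, by inserting $1/(-q;q)_{n} = \sum_{k}(-1)^k q^{\binom{k+1}{2}}(q;q)_{n}^{-1}\cdots$ and collapsing telescoping sums as in \cite{an88}). The goal is to match the unilateral sum against a bilateral ``Hecke-type'' theta series indexed by a region in $\zz^{2}$ cut out by $|j|\leq n$, producing the stated expression with sign $(-1)^{n+j}$ and the factor $(1-q^{2n+1})$ that accounts for the symmetry between consecutive levels. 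Once this identity is in hand, extracting the coefficient of $q^N$ shows $S(N)$ equals the signed count of pairs $(n,j)$ with $N = n(3n+1)/2 - j^2$ or $N = n(3n+1)/2 - j^{2} - (2n+1)$.

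The substitution $u=6n+1$, $v=2j$ (respectively $u=-(6n+2)$ from the second family) converts these quadratic conditions into $u^{2}-6v^{2} = 24N+1$, and a routine check mod $12$ shows that the sign $(-1)^{n+j}$ matches exactly the congruence split $u+3v \equiv \pm 1$ versus $u+3v\equiv \pm 5 \Mod{12}$ that defines $T_{\mathrm{C}}(24N+1)$. The equivalence relation $(u,v)\sim \pm(5+2\sqrt{6})^{r}(u+v\sqrt{6})$ corresponds to the fundamental unit of $\zz[\sqrt{6}]$ and ensures that each orbit is counted once, yielding $S(N) = T_{\mathrm{C}}(24N+1)$.

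For the multiplicativity and the prime-power formula I would recognise $T_{\mathrm{C}}(m)$ as $\sum_{N\mathfrak{a}=m}\chi(\mathfrak{a})$, where $\chi$ is a Gr\"ossencharakter on $\qq(\sqrt{6})$ of conductor dividing $(12)$ determined on principal ideals by
\[
\chi((u+v\sqrt{6})) = \begin{cases} +1 & u+3v\equiv \pm 1\Mod{12},\\ -1 & u+3v\equiv \pm 5\Mod{12},\end{cases}
\]
extended by the sign character $\sgn$ of the real embedding so that $\chi$ is trivial on the unit $5+2\sqrt{6}$. Checking well-definedness on the narrow class group (which for $\qq(\sqrt{6})$ is cyclic of order $2$, generated by the ramified prime above $2$ or $3$) is the technical core of the argument. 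Given $\chi$, multiplicativity of $T_{\mathrm{C}}$ on coprime arguments is immediate from the multiplicativity of ideal norms, and the explicit values at $p^{e}$ reduce to enumerating how the rational prime $p$ splits in $\qq(\sqrt{6})$: inert ($p\equiv 13,19\Mod{24}$), ramified-like behaviour with a negative character value ($p\equiv 7\Mod{24}$), and split ($p\equiv 1\Mod{24}$) where $T_{\mathrm{C}}(p)=\chi(\mathfrak{p})+\chi(\bar{\mathfrak{p}})=\pm 2$ and the geometric-sum formula $T_{\mathrm{C}}(p^e) = \sum_{k=0}^{e}\chi(\mathfrak{p})^{k}\chi(\bar{\mathfrak{p}})^{e-k}$ evaluates to $(\pm 1)^{e}(e+1)$.

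The main obstacle is the first step: producing the Hecke-type representation of $\sigma(q)$. The identity is far from combinatorially obvious and requires either a carefully chosen Bailey pair or a direct manipulation that exploits the alternating signs hidden inside $(-q;q)_{n}$; once the double sum is established, the bijection with Pell solutions and the passage to Hecke characters are essentially bookkeeping with congruences modulo $12$ and with the class-field data of $\qq(\sqrt{6})$.
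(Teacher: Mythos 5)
The paper does not prove this theorem at all: it is quoted from Andrews--Dyson--Hickerson \cite{an88} (with the prime-power formula in the form Cohen uses in \cite{co88}), so there is no in-paper argument to compare yours against. Your outline does faithfully reproduce the strategy of the original proof --- a Bailey-pair/$q$-series manipulation to reach the Hecke-type double sum, the substitution $u=6n+1$, $v=2j$ landing on $u^{2}-6v^{2}=24N+1$, and the Gr\"ossencharakter interpretation giving multiplicativity and the values at prime powers --- but as written it is a road map rather than a proof. The entire difficulty is concentrated in the step you defer: establishing
$$
\sigma(q)=\sum_{\substack{n\geq 0\\ |j|\leq n}}(-1)^{n+j}q^{n(3n+1)/2-j^{2}}(1-q^{2n+1}),
$$
and ``a Bailey-pair argument or collapsing telescoping sums as in \cite{an88}'' names the technique without exhibiting the specific pair or the chain of intermediate identities, so nothing is actually proved there. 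You acknowledge this yourself in your closing paragraph; it remains a genuine gap.

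There are also two concrete slips to repair in the bookkeeping you describe as routine. First, the factor $(1-q^{2n+1})$ contributes exponents $n(3n+1)/2-j^{2}$ and $n(3n+1)/2-j^{2}+(2n+1)$ (plus, not minus), and the second family corresponds to $u=\pm(6n+5)$, $v=2j$, since $24\bigl(n(3n+1)/2-j^{2}+2n+1\bigr)+1=(6n+5)^{2}-6(2j)^{2}$; your proposed $u=-(6n+2)$ is even and cannot satisfy $u^{2}-6v^{2}=24N+1$. Second, in the splitting analysis, $p\equiv 19\Mod{24}$ is \emph{split} in $\qq(\sqrt{6})$ (one checks $\left(\frac{6}{p}\right)=+1$ there), not inert; the value $T_{\mathrm{C}}(p^{e})=1$ for even $e$ in that case arises because the character takes opposite signs on the two primes above $p$, so the sum $\sum_{k=0}^{e}\chi(\mathfrak{p})^{k}\chi(\bar{\mathfrak{p}})^{e-k}$ alternates and collapses to $1$ or $0$ according to the parity of $e$, whereas $p\equiv 13\Mod{24}$ really is inert. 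The Hecke-character framework itself, the role of the fundamental unit $5+2\sqrt{6}$ of norm $+1$, and the narrow class number $2$ for $\qq(\sqrt{6})$ are correctly identified.
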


 In \cite{co88}, using the results in \cite{an88}, Cohen defined a Maass cusp form on $\varGamma_{0}(2)$  with a nontrivial multiplier system. 
Cohen showed the $q$-series identity
$$
\varphi(q) = q^{1/24}\sigma(q)+q^{-1/24}\sigma^{*}(q) = \sum_{n\equiv 1\Mod{24}}T_{\mathrm{C}}(n)q^{|n|/24}
$$
where $\sigma^{*}(q)$ is an another $q$-series defined by
$$
\sigma^{*}(q) = 2\sum_{n\geq 1} \frac{(-1)^{n}q^{n^{2}}}{(1-q)(1-q^{3})\cdots(1-q^{2n-1})}.
$$
Also, he defined a  function $u_{\mathrm{C}}:\mathcal{H}\to \mathbb{C}$ as 
\begin{align}
\label{coma}
u_{\mathrm{C}}(z) = \sqrt{y}\sum_{n\equiv 1\Mod{24}}T_{\mathrm{C}}(n)K_{0}(2\pi |n|y/24)e^{2\pi i n x/24}, \quad z=x+iy\in \hh
\end{align}
where $K_{0}$ is the $K$-bessel function
$$
K_{0}(y) = \frac{1}{2}\int_{0}^{\infty} e^{-y(t+t^{-1})/2} \frac{dt}{t}
$$
and proved the following:
\begin{theorem}[Cohen, \cite{co88}]
\label{cothm}
\begin{enumerate}
\item $\Delta u_{\mathrm{C}}(z) = \frac{1}{4}u_{\mathrm{C}}(z)$ for all $z\in \hh$, where 
$$
\Delta = -y^{2}\left(\frac{\partial^{2}}{\partial x^{2}} + \frac{\partial^{2}}{\partial y^{2}}\right)
$$
is the (hyperbolic) Laplacian on $\mathcal{H}$. 
\item $u_{\mathrm{C}}(\gamma z)=\nu_{\mathrm{C}}(\gamma)u_{\mathrm{C}}(z)$ for all $\gamma\in \varGamma_{0}(2)$ and $z\in \hh$, where the multiplier system $\nu_{\mathrm{C}}:\varGamma_{0}(2)\to \mathbb{S}^{1} = \{z\in \mathbb{C}\,:\, |z| = 1\}$ is a homomorphism determined by 
\begin{align*}
\nu_{\mathrm{C}}\left(\begin{pmatrix}1&1\\0&1\end{pmatrix}\right)=\nu_{\mathrm{C}}\left(\begin{pmatrix}1&0\\2&1\end{pmatrix}\right)=\zeta_{24}. 
\end{align*}
\end{enumerate}
\end{theorem}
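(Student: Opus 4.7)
The Laplacian eigenvalue equation (1) is a termwise computation. Each Fourier building block $w_t(z) = \sqrt{y}\,K_0(2\pi |t| y)\,e^{2\pi i t x}$ (for real $t\neq 0$) satisfies $\Delta w_t = \tfrac14 w_t$ as an immediate consequence of the modified Bessel equation of order $0$ satisfied by $K_0$. The explicit formula \eqref{TCform} yields $T_{\mathrm{C}}(n) = O_\varepsilon(|n|^\varepsilon)$, and $K_0(y)$ decays exponentially as $y \to \infty$, so the defining series for $u_{\mathrm{C}}$ and its termwise first and second derivatives converge absolutely and uniformly on compact subsets of $\mathcal{H}$. Hence $\Delta$ may be moved inside the sum, giving (1).

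For (2), one reduces to a generating set of $\varGamma_0(2)$. The group is generated by $T = \sm{1 & 1 \\ 0 & 1}$, $V = \sm{1 & 0 \\ 2 & 1}$ and $-I$, with $\varGamma_0(2)/\{\pm I\}$ free on $T, V$; so the prescription of the statement extends uniquely to a character $\nu_{\mathrm{C}}$ on $\varGamma_0(2)$ once consistency with $-I$ is fixed. The relation $u_{\mathrm{C}}(Tz) = \zeta_{24}\,u_{\mathrm{C}}(z)$ is read off the Fourier expansion: the restriction $n \equiv 1 \pmod{24}$ forces each phase $e^{2\pi i n/24}$ to equal $\zeta_{24}$. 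The nontrivial content of (2) is therefore the single identity
\[
u_{\mathrm{C}}\!\left(\frac{z}{2z+1}\right) = \zeta_{24}\,u_{\mathrm{C}}(z),
\]
i.e.\ the transformation at the cusp $0$.

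To establish the $V$-equation I would run a Maass--Weil-type converse theorem. Introduce the two half $L$-series
\[
L_{\mathrm{C}}^{\pm}(s) = \sum_{\substack{\pm n > 0 \\ n \equiv 1\,(24)}} \frac{T_{\mathrm{C}}(n)}{|n|^{s}},
\]
and identify each with a Hecke $L$-function of a Grössencharakter $\chi$ on $\mathbb{Q}(\sqrt{6})$ twisted by a Dirichlet character mod $24$: $\chi$ is built from the fundamental unit $5+2\sqrt 6$ and contributes a zero spectral parameter, while the Dirichlet twist encodes the Andrews--Dyson--Hickerson sign rule $u+3v \pmod{12}$. Such $L$-functions are entire and satisfy a functional equation $s \leftrightarrow 1-s$ with gamma factor $\Gamma_{\mathbb{R}}(s)^{2}$ (the one matching eigenvalue $\tfrac14$), whose root number produces the factor $\zeta_{24}$. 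Feeding the two functional equations, one for $L_{\mathrm{C}}^{+}$ and one for $L_{\mathrm{C}}^{-}$, into a converse theorem for $\varGamma_0(2)$ with multiplier packages them into modularity at both cusps $\infty$ and $0$, delivering the $V$-relation with precisely the multiplier $\zeta_{24}$, hence (2).

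\textbf{Main obstacle.} The delicate part is the arithmetic identification of $L_{\mathrm{C}}^{\pm}$ with an explicit Hecke $L$-function: verifying that the Pell-equation weighting by residues of $u + 3v$ mod $12$ is literally the value of an idele class character of $\mathbb{Q}(\sqrt{6})$, and that the conductor and archimedean data of this character reproduce exactly level $2$, spectral parameter $0$, and the multiplier $\zeta_{24}$ on both generators $T$ and $V$. Modulo this number-theoretic bookkeeping, the functional equations and the converse theorem are routine.
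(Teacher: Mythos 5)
This theorem is a cited result (Cohen \cite{co88}); the paper does not reprove it but summarizes Cohen's argument as interpreting $T_{\mathrm{C}}(n)$ as sums of Hecke character values on $\mathbb{Q}(\sqrt{6})$ and deducing modularity from the functional equation of the associated $L$-function, which is exactly your route: termwise Bessel computation for the eigenvalue, reduction to the generators $T$ and $V$, and a converse-theorem argument for the $V$-relation. One minor inaccuracy: $\varGamma_{0}(2)/\{\pm I\}$ is not free on $T,V$ (there is the relation $(TV^{-1})^{2}=-I$), but since you prescribe $\nu_{\mathrm{C}}(T)=\nu_{\mathrm{C}}(V)$ this relation is automatically respected and your argument is unaffected.
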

Note that $\varGamma_{0}(2)$ is generated by two elements $$T = \pmat{1}{1}{0}{1}, \quad R = \pmat{1}{0}{2}{1}.$$
In the paper, Cohen interpreted the coefficients $T_{\mathrm{C}}(n)$ as a sum of Hecke characters on the number field $\mathbb{Q}(\sqrt{6})$ defined as $$
\chi_{C}(\mathfrak{a}) := \begin{cases}
 i^{yx^{-1}}\left(\frac{12}{x}\right) & \iif y\equiv 0\Mod{2} \\
i^{yx^{-1}+1}\left(\frac{12}{x}\right) & \iif y\equiv 1\Mod{2}
\end{cases}
$$
for $\mathfrak{a} = (x+y\sqrt{6}) \subseteq \mathbb{Z}[\sqrt{6}]$. 
Using this, he showed that the $L$-function attached to $u_{\mathrm{C}}(z)$ coincides with the $L$-function of the Hecke character $\chi_{C}$, and proved that $u_{\mathrm{C}}(z)$ satisfies the desired properties by using the functional equation of the $L$-function. 


\subsubsection{Li-Ngo-Rhoades' Maass wave form}
In \cite{li13},  Li, Ngo and Rhoades find similar example as Cohen's example related to the field $\qq(\sqrt{2})$, based on the Corson, Favero, Liesinger and Zubairy's $q$-series. 
In \cite{co04}, the authors proved that coefficients of  two $q$-series 
\begin{align*}
W_{1}(q)&:= \sum_{n\geq 0} \frac{(q)_{n}(-1)^{n}q^{\frac{n(n+1)}{2}}}{(-q)_{n}} = 1-q+2q^{2}-q^{3}-2q^{5}+3q^{6}+\cdots, \\
W_{2}(q)&:= \sum_{n\geq 1} \frac{(-1; q^{2})_{n}(-q)^{n}}{(q;q^{2})_{n}} = -2q-2q^{3}+2q^{4}+2q^{6}+2q^{8}-2q^{9}+\cdots
\end{align*}
are related to the Hecke character $\chi_{L}$ on $\mathbb{Q}(\sqrt{2})$, where
$$
\chi_{L}(\mathfrak{a}):= \begin{cases} 1& \iif N(\mathfrak{a})\equiv \pm1\Mod{16} \\
-1 & \iif N(\mathfrak{a}) \equiv \pm 7\Mod{16} \\
0 & \text{otherwise}
\end{cases}
$$
for integral ideals $\mathfrak{a}\subset \mathbb{Z}[\sqrt{2}]$. They proved that the following sum 
$$
T_{\mathrm{L}}(n):=\begin{cases}
 \sum_{\substack{\mathfrak{a}\subset \mathbb{Z}[\sqrt{2}]\\ N(\mathfrak{a}) = |n|}} \chi_{L}(\mathfrak{a}) & \iif n\equiv 1\Mod{8}\\
 0 & \text{otherwise}.
 \end{cases}
$$
of Hecke characters appears as coefficients on $W_{1}(q)$ and $W_{2}(q)$:
\begin{theorem}[Corson-Favero-Liesinger-Zubairy, \cite{co04}]
We have 
$$
qW_{1}(q^{8}) + q^{-1}W_{2}(q^{8}) = \sum_{n\in \mathbb{Z}} T_{\mathrm{L}}(n)q^{|n|}.
$$
Also, the sequence $T_{\mathrm{L}}(n)$ is completely multiplicative, i.e. $T_{\mathrm{L}}(mn) =T_{\mathrm{L}}(m)T_{\mathrm{L}}(n)$ for any two coprime integers $m$ and $n$. At last, we have an explicit formula for $T_{\mathrm{L}}(p^{e})$ as
\begin{align}
\label{TLform}
T_{\mathrm{L}}(p^{e}) = \begin{cases} 
e+1 & \iif T_{\mathrm{L}}(p)=2, p\equiv  1(8) \\ 
(-1)^{e}(e+1) & \iif T_{\mathrm{L}}(p)=-2, p\equiv  1(8) \\ 
(-1)^{\frac{e}{2}} & \iif e\equiv 0(2), p\equiv 5(8) \\
0 & \iif p=2 \text{ or } (e\equiv 1(2), p\equiv 5(8))
\end{cases}
\end{align}
where $p$ is a prime $\equiv 1\Mod{4}$ or $p = -p'$ for some prime $p'\equiv 3\Mod{4}$.
\end{theorem}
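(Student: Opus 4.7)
The plan is to model the argument on Andrews-Dyson-Hickerson's treatment of $\sigma(q)$ (Theorem \ref{adh}), proceeding through the three assertions in turn. For the $q$-series identity, I would apply Bailey's lemma with a Bailey pair tailored to the rising/falling factorial structure of $W_{1}$ and $W_{2}$, converting each Eulerian series into an indefinite theta-type double sum indexed by pairs $(u, v) \in \zz^{2}$. After the substitution $q \mapsto q^{8}$, the exponents in $qW_{1}(q^{8}) + q^{-1}W_{2}(q^{8})$ should line up with $|u^{2} - 2v^{2}|$, and the sign weights accumulated from the transforms depend only on residues of $u, v$ modulo divisors of $16$. To recognize the result as $\sum_{n} T_{L}(n) q^{|n|}$, I use that $\zz[\sqrt{2}]$ is a principal ideal domain with fundamental unit $1 + \sqrt{2}$: orbits of solutions to $u^{2} - 2v^{2} = \pm n$ under the unit action biject with integral ideals of norm $|n|$, and the accumulated sign data, depending only on $N(\mathfrak{a}) \bmod 16$, agrees with $\chi_{L}(\mathfrak{a})$ by inspection.

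For the multiplicativity of $T_{L}$, the key step is that $\chi_{L}$ is genuinely a character of the ideal semigroup of $\zz[\sqrt{2}]$: the set $\{\pm 1, \pm 7\} \bmod 16$ is a subgroup of $(\zz/16)^{\times}$ on which $\chi_{L}$ restricts to a $\{\pm 1\}$-valued character, so $\chi_{L}(\mathfrak{a}\mathfrak{b}) = \chi_{L}(\mathfrak{a}) \chi_{L}(\mathfrak{b})$. For coprime $m, n$ with $mn \equiv 1 \bmod 8$, unique factorization of ideals in the Dedekind domain $\zz[\sqrt{2}]$ yields a bijection between ideals of norm $|mn|$ and pairs of ideals of norms $|m|$ and $|n|$, so $T_{L}(mn) = T_{L}(m) T_{L}(n)$.

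The explicit formula for $T_{L}(p^{e})$ then follows from a case analysis on the splitting of $p$ in $\zz[\sqrt{2}]$. The prime $p = 2$ is ramified, and every power $2^{e}$ lies outside $\{\pm 1, \pm 7\} \bmod 16$, so $T_{L}(2^{e}) = 0$. For $p \equiv \pm 3 \bmod 8$ the prime is inert, so the only ideals of norm a power of $p$ are $(p)^{k}$; since $p^{2} \equiv 9 \equiv -7 \bmod 16$, one gets $T_{L}(p^{e}) = (-1)^{e/2}$ for even $e$ and $0$ for odd $e$. For $p \equiv \pm 1 \bmod 8$, $p$ splits as $\mathfrak{p}\bar{\mathfrak{p}}$ with $\chi_{L}(\mathfrak{p}) = \chi_{L}(\bar{\mathfrak{p}}) = T_{L}(p)/2 \in \{\pm 1\}$ determined by $p \bmod 16$, and the $e+1$ ideals $\mathfrak{p}^{k}\bar{\mathfrak{p}}^{e-k}$ of norm $p^{e}$ each contribute $(T_{L}(p)/2)^{e}$, yielding $(T_{L}(p)/2)^{e}(e+1)$.

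The principal obstacle is the first stage: producing the indefinite theta representation of $qW_{1}(q^{8}) + q^{-1}W_{2}(q^{8})$ from the Eulerian forms of $W_{1}$ and $W_{2}$. This requires selecting the right Bailey pair and carefully bookkeeping sign and congruence conditions, and constitutes the technical heart of \cite{co04}; once that identity is available, parts (b) and (c) reduce to elementary algebraic number theory in $\qq(\sqrt{2})$.
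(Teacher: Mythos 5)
First, note that the paper itself offers no proof of this statement: it is imported verbatim from Corson--Favero--Liesinger--Zubairy \cite{co04}, so your attempt can only be measured against that source, whose overall strategy (modelled on Andrews--Dyson--Hickerson) you have correctly identified. Your treatment of the second and third assertions is essentially right and, modulo bookkeeping, complete: every odd ideal norm in $\zz[\sqrt{2}]$ is $\equiv \pm 1 \Mod{8}$, hence lies in the subgroup $\{1,7,9,15\}$ of $(\zz/16\zz)^{\times}$ on which $N(\mathfrak{a})\mapsto \chi_{L}(\mathfrak{a})$ restricts to a genuine character; unique factorization of ideals gives the bijection for coprime $m,n$; and the ramified/inert/split trichotomy according to $p\Mod{8}$ yields the stated values of $T_{\mathrm{L}}(p^{e})$ exactly as you compute them. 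One point you must make explicit is the first of these: without knowing that all odd norms land inside $\{\pm 1,\pm 7\}\Mod{16}$, the identity $\chi_{L}(\mathfrak{a}\mathfrak{b})=\chi_{L}(\mathfrak{a})\chi_{L}(\mathfrak{b})$ would fail whenever one factor has norm $\equiv \pm3,\pm5\Mod{16}$ (where $\chi_{L}=0$) but the product does not; ``the set $\{\pm1,\pm7\}$ is a subgroup'' alone does not suffice. You also need to track the signed conventions carefully: $T_{\mathrm{L}}(n)$ is supported on $n\equiv 1\Mod{8}$ and the ``primes'' in the statement are the signed representatives $p$ or $-p'$, so the multiplicativity claim for coprime $m,n$ requires checking that the congruence conditions on $m$, $n$ and $mn$ are consistent on both sides.

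The genuine gap is the first assertion. Saying ``apply Bailey's lemma with a Bailey pair tailored to the rising/falling factorial structure of $W_{1}$ and $W_{2}$'' names a tool but does not produce the identity: the entire technical content of \cite{co04} at this stage is the construction of the specific Bailey pairs (or equivalent $q$-series transformations) that convert the Eulerian forms of $W_{1}$ and $W_{2}$ into Hecke-type indefinite theta series supported on the form $u^{2}-2v^{2}$, together with the verification that the resulting sign pattern, after sorting solutions into orbits under the unit group of $\zz[\sqrt{2}]$, reproduces the character $\chi_{L}$. None of that is carried out here, and your closing paragraph concedes as much. As written, the proposal is a correct road map for the second and third assertions and an unexecuted plan for the first, so it cannot be accepted as a proof of the theorem.
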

 Using their results, following the Cohen's argument, Li, Ngo and Rhoades proved that there exists a corresponding Maass wave form on $\varGamma_{0}(4)$ with a nontrivial multiplier system, which is \emph{not} a cusp form.
They defined a function $u_{\mathrm{L}}:\mathcal{H}\to \mathbb{C}$ as
\begin{align}
\label{lima}
u_{\mathrm{L}}(z)= \sqrt{y}\sum_{n\in \mathbb{Z}} T_{\mathrm{L}}(n)K_{0}(2\pi|n|y/8)e^{2\pi i n x/24}, \quad z = x+iy\in \mathcal{H} 
\end{align}
and proved the following:
\begin{theorem}[Li-Ngo-Rhoades, \cite{li13}]
\label{lithm}
$u_{\mathrm{L}}(z)$ is a Maass wave form on $\varGamma_{0}(4)$ with an eigenvalue $\lambda=\frac{1}{4}$ and a nontrivial multiplier system $\nu_{\mathrm{L}}:\varGamma_{0}(4)\to \mathbb{S}^{1}$. More precisely, 
\begin{enumerate}
\item $\Delta u_{\mathrm{L}}(z) = \frac{1}{4}u_{\mathrm{L}}(z)$ for all $z\in \mathcal{H}$. 
\item $u_{\mathrm{L}}(\gamma z) = \nu_{\mathrm{L}}(\gamma)u_{\mathrm{L}}(z)$ for all $\gamma\in \varGamma_{0}(4)$ and $z\in \mathcal{H}$, where the multiplier system $\nu_{\mathrm{L}}:\varGamma_{0}(4)\to \mathbb{S}^{1}$ is a homomorphism determined by  
$$
\nu_{\mathrm{L}}\left(\pmat{1}{1}{0}{1}\right)= \nu_{\mathrm{L}}\left(\pmat{1}{0}{4}{0}\right) = \zeta_{8}, \quad \nu_{\mathrm{L}}\left(\pmat{-1}{0}{0}{-1}\right) = 1. 
$$
\item $u_{\mathrm{L}}(z)$ vanishes at the cusps $0$ and $\infty$, but does not vanish at the cusp $1/2$. 
\end{enumerate}
\end{theorem}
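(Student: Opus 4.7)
The plan is to follow Cohen's strategy from Theorem \ref{cothm}, adapted to the field $\qq(\sqrt{2})$, level $4$, and with special care at the cusp $1/2$. Write $u_{\mathrm{L}}(z)=\sum_n T_{\mathrm{L}}(n)\phi_n(z)$ with $\phi_n(z):=\sqrt{y}\,K_0(2\pi|n|y/8)e^{2\pi inx/8}$, the standard Maass--Whittaker building block. A direct check using the modified Bessel equation shows that $\Delta\phi_n=\tfrac{1}{4}\phi_n$, and the exponential decay of $K_0$ against the polynomial growth of $T_{\mathrm{L}}$ justifies termwise differentiation, giving part (1).

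For part (2), I would identify the Dirichlet series $L_{\mathrm{L}}(s):=\sum_{n\ge 1}T_{\mathrm{L}}(n)n^{-s}$ (and its twists by Dirichlet characters $\psi$ modulo divisors of $4$) with the Hecke $L$-function $L(s,\chi_L)$ on $\qq(\sqrt{2})$; the identification is immediate from the definition of $T_{\mathrm{L}}(n)$ as a sum of $\chi_L(\mathfrak{a})$ over integral ideals of norm $|n|$. By Hecke's theorem each such $L$-function is entire and satisfies the standard functional equation under $s\leftrightarrow 1-s$. Combining these with the Mellin pair $\int_0^\infty K_0(y)y^{s-1}dy=\tfrac{1}{4}\Gamma(s/2)^2 2^s$ and invoking the Maass--Weil converse theorem (in the real-analytic setting, for congruence subgroups with nontrivial multiplier) yields $u_{\mathrm{L}}(\gamma z)=\nu_{\mathrm{L}}(\gamma)u_{\mathrm{L}}(z)$ for all $\gamma\in\varGamma_0(4)$. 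The listed values of $\nu_{\mathrm{L}}$ on the generators $T=\smat{1}{1}{0}{1}$, $\smat{1}{0}{4}{1}$, and $-I$ are then read off from, respectively, the congruence $n\equiv 1\Mod{8}$ in the Fourier expansion, the root number produced by the Fricke involution $w_4=\smat{0}{-1}{4}{0}$, and the symmetry of the summation under $n\mapsto -n$.

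For part (3), vanishing at $\infty$ is immediate from the exponential decay of $K_0$ since every nonzero term has $|n|\ge 1$. Vanishing at $0$ follows by pulling back along $w_4$: the functional equation for $L_{\mathrm{L}}(s)$ implies that $u_{\mathrm{L}}|w_4$ is again a Fourier series of the same shape with no constant term, so $u_{\mathrm{L}}$ decays at the cusp $0$. The nontrivial assertion is non-vanishing at $1/2$: choosing a scaling matrix $\sigma_{1/2}\in\SL_2(\rr)$ sending $\infty$ to $1/2$, one expands $u_{\mathrm{L}}(\sigma_{1/2}z)$ into its Fourier series at that cusp by a Poisson-summation argument and shows that the $\sqrt{y}$-coefficient is a nonzero finite Gauss sum in the $T_{\mathrm{L}}(n)$ twisted by $e^{2\pi in/16}$-type factors. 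I expect this last step to be the main obstacle: the congruence pattern $N(\mathfrak{a})\equiv\pm 1\Mod{16}$ versus $\pm 7\Mod{16}$ defining $\chi_L$ must conspire to cancel at $0$ and $\infty$ but to produce constructive interference at $1/2$, and verifying the nonvanishing requires an explicit Gauss-sum evaluation rather than a soft $L$-function argument.
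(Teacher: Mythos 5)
The paper does not prove this theorem at all: it is quoted verbatim from Li--Ngo--Rhoades \cite{li13} as background, and the only methodological hint the paper gives is that their argument ``follows Cohen's argument,'' i.e.\ identify the Dirichlet series built from $T_{\mathrm{L}}(n)$ with the Hecke $L$-function of $\chi_L$ on $\qq(\sqrt{2})$ and derive modularity from its functional equation. Your sketch follows exactly that strategy, and parts (1) and the vanishing at $\infty$ in (3) are correctly and completely argued (note the exponent $e^{2\pi inx/24}$ in \eqref{lima} is a typo in the paper; your $e^{2\pi inx/8}$ is the right normalization).

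Two soft spots remain. First, in part (2) you invoke a Weil-type converse theorem with ``twists by Dirichlet characters modulo divisors of $4$''; that is not the right collection of twists for a converse theorem (one would need characters to moduli coprime to the level), and in fact no twists are needed here: since $w_4\smat{1}{1}{0}{1}w_4^{-1}=\smat{1}{0}{-4}{1}$ and $\varGamma_0(4)=\langle T,\smat{1}{0}{4}{1},-I\rangle$, the single functional equation of $L(s,\chi_L)$ gives the transformation under the Fricke involution $w_4$, and conjugating $T$ by $w_4$ then yields the transformation under the third generator. This is the Hecke-style argument Cohen and Li--Ngo--Rhoades actually use, and it is both cleaner and avoids the misstated hypothesis. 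Second, the non-vanishing at the cusp $1/2$ --- which is the genuinely new content of part (3) relative to Cohen's cuspidal example, and the reason $f_{\mathrm{L}}$ is only defined on $S_0\cup S_\infty$ --- is left as an expectation: you correctly identify that it reduces to showing a certain finite exponential sum in the $T_{\mathrm{L}}(n)$ is nonzero, but you do not carry out that evaluation, so as written the proof of (3) is incomplete at precisely the point where the theorem differs from its prototype.
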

Note that $\varGamma_{0}(4)$ is generated by three elements
$$
T = \pmat{1}{1}{0}{1}, \quad R' = \pmat{1}{0}{4}{1}, \quad -I = \pmat{-1}{0}{0}{-1}.
$$

\subsection{Quantum modular forms}

Quantum modular forms were first defined by Zagier in his paper \cite{za10}. 
They are functions defined on $\qq$ with modular properties, which are not modular but their failure to be modular can be extended as analytic functions on $\mathbb{R}$, except finitely many points. 
We will use the following definition of the quantum modular form, which is slightly different from the original one in \cite{za10}.

\begin{definition}
Let $N$ be a positive integer, $k\in \frac{1}{2}\zz$ and $\nu$ be a multiplier system on $\varGamma_{0}(N)$. 
Then a function $f:\qq\to\cc$ is a quantum modular form of weight $k$ and multiplier system $\nu$ on $\varGamma_{0}(N)$  if for all $\gamma = \left(\begin{smallmatrix}a&b\\c&d\end{smallmatrix}\right)\in \varGamma_{0}(N)$,the functions
$$
f(x)-(f|_{k,\nu}\gamma)(x)=h_{\gamma}(x),
$$
where $$ (f|_{k, \nu}\gamma)(x)=\nu(\gamma)^{-1}|cx+d|^{-k}f\left(\frac{ax+b}{cx+d}\right), \quad \gamma=\begin{pmatrix} a&b \\ c&d\end{pmatrix}\in \varGamma_{0}(N),$$   can be extended smoothly on $\rr$ except finitely many points $S\subset \qq$. 
We let $\QQ_{k}(\varGamma_{0}(N), \nu)$ be the space of weight $k$ quantum modular forms on $\varGamma_{0}(N)$ with multiplier system $\nu$. 
\end{definition}

For example, recall  the Ramanujan's $q$-series $\sigma(q)$. 
One can check that the following $q$-series identities hold:
\begin{align*}
\sigma(q)&= 1+\sum_{n=0}^{\infty}(-1)^{n}q^{n+1}(1-q)(1-q^{2})\cdots(1-q^{n}) \\
\sigma^{*}(q)&= -2\sum_{n=0}^{\infty} q^{n+1}(1-q^{2})(1-q^{4})\cdots (1-q^{2n}). 
\end{align*}
(The first identity was shown in the Andrew's paper \cite{an86} and the second derived in a similar way by Cohen.) Both series make sense when $|q|<1$ and $q$ is a root of unity. When $q$ is a root of unity, two series are related as 
$$
\sigma(q) = -\sigma^{*}(q^{-1}). 
$$
(For the proof, see \cite{za10}.) Now define $f_{\mathrm{C}}:\mathbb{Q}\to \mathbb{C}$ as
\begin{align}
\label{fCdefn}
f_{\mathrm{C}}(x):= q^{1/24}\sigma(q)= -q^{1/24} \sigma^{*}(q^{-1}), \quad q = e^{2\pi i x}, x\in \mathbb{Q}.
\end{align}
In \cite{za10}, Zagier showed that this function is a quantum modular form of weight 1 with a multiplier system $\nu_{\mathrm{C}}$ on $\varGamma_{0}(2)$. 
\begin{prop}[Zagier, \cite{za10}] $f_{\mathrm{C}}:\mathbb{Q}\to \mathbb{C}$ satisfies the functional equation
\begin{align}
\label{fCfeq}
f_{\mathrm{C}}(x+1) = \zeta_{24}f_{\mathrm{C}}(x), \quad f_{\mathrm{C}}(x) - \zeta_{24}^{-1}\frac{1}{|2x+1|}f_{\mathrm{C}}\left(\frac{x}{2x+1}\right) =  h_{C}(x)
\end{align}
for any $x\in \mathbb{Q}$, where $\zeta_{m}=e^{2\pi i /m}$ is an $m$th root of unity and  $h_{C}:\mathbb{R}\to \mathbb{C}$ is a smooth function on $\mathbb{R}$ which is  real-analytic except at $x=-1/2$. 
\end{prop}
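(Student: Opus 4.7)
The plan is to verify the two identities in (\ref{fCfeq}) separately.

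The first identity, $f_{\mathrm{C}}(x+1)=\zeta_{24}f_{\mathrm{C}}(x)$, is immediate from the definition (\ref{fCdefn}): since $e^{2\pi i(x+1)} = e^{2\pi i x}$, replacing $x$ by $x+1$ leaves $q$ (and hence $\sigma(q)$) unchanged while multiplying the prefactor $q^{1/24} = e^{2\pi i x/24}$ by $\zeta_{24}$.

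For the second identity I would follow Zagier's strategy in \cite{za10}. The key observation is that at rational $x$ the value $f_{\mathrm{C}}(x)$ has \emph{two} natural extensions off $\qq$. Using the finite-sum representations
\begin{align*}
\sigma(q) &= 1 + \sum_{n\geq 0}(-1)^{n} q^{n+1}(1-q)(1-q^{2})\cdots(1-q^{n}), \\
\sigma^{*}(q) &= -2\sum_{n\geq 0} q^{n+1}(1-q^{2})(1-q^{4})\cdots(1-q^{2n}),
\end{align*}
both of which are valid at roots of unity where $\sigma(q)=-\sigma^{*}(q^{-1})$, I define
\[
F^{+}(z) := q^{1/24}\sigma(q) \text{ for } z\in \hh, \qquad F^{-}(z) := -q^{1/24}\sigma^{*}(q^{-1}) \text{ for } \Im(z)<0,
\]
so that $F^{\pm}$ are holomorphic on the respective half-planes and extend continuously to $\qq$ with common boundary value $f_{\mathrm{C}}$. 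One then introduces the two cocycles
\[
h_{\mathrm{C}}^{\pm}(z) := F^{\pm}(z) - \zeta_{24}^{-1}(2z+1)^{-1} F^{\pm}\!\left(\frac{z}{2z+1}\right)
\]
and shows that each extends real-analytically to $\rr\setminus\{-1/2\}$, and that the two extensions agree there; their common value is then the desired $h_{\mathrm{C}}(x)$ (with $|2x+1|^{-1}$ absorbing the branch choice on the lower half-plane).

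The crucial ingredient for the extension is the Lewis--Zagier period function $\psi_{\mathrm{C}}$ attached to the Maass form $u_{\mathrm{C}}$ in \cite{le01}: $\psi_{\mathrm{C}}$ is holomorphic on $\cc\setminus(-\infty,0]$ and satisfies a 3-term functional equation encoding how $u_{\mathrm{C}}$ transforms under the generators $T$ and $R$ of $\varGamma_{0}(2)$ with multiplier $\nu_{\mathrm{C}}$ (Theorem \ref{cothm}). On each half-plane $h_{\mathrm{C}}^{\pm}$ is proportional to $\psi_{\mathrm{C}}$, and the holomorphicity of $\psi_{\mathrm{C}}$ across the positive real axis yields the common real-analytic extension to $\rr\setminus\{-1/2\}$. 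The main obstacle, and the source of the singularity at $x=-1/2$, is that $2z+1=0$ there: the Möbius image $z/(2z+1)$ diverges to $i\infty$, so smoothness of $h_{\mathrm{C}}$ at $-1/2$ has to be deduced from the rapid decay of the Fourier expansion (\ref{coma}) coming from the cuspidality of $u_{\mathrm{C}}$ at $\infty$, while the factor $|2z+1|^{-1}$ switching to its reciprocal across $x=-1/2$ is exactly what prevents real-analyticity at that point.
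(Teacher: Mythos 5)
Your proposal is correct and follows essentially the same route the paper takes: the paper gives no independent proof of this proposition, deferring entirely to Zagier's argument via the Lewis--Zagier period theory of $u_{\mathrm{C}}$, which is exactly what you sketch (terminating $q$-series at roots of unity, holomorphic extensions $F^{\pm}$ to the two half-planes, and the $R$-cocycle extending real-analytically on $\rr\setminus\{-1/2\}$ with $C^{\infty}$ smoothness at $-1/2$ from cuspidality). The only loose point is your closing remark that the sign change in $|2x+1|$ is ``exactly what prevents real-analyticity'' at $-1/2$; the genuine obstruction is that the asymptotic expansion of the period integral at that cusp is non-convergent, but this does not affect the structure of the argument.
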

Proof uses the slightly generalized version of  the theory of period functions of Maass wave forms developed  by Lewis-Zagier.  (For details, see \cite{za10}.) By the similar argument, Li-Ngo-Rhoades showed that there exists a quantum modular form corresponds to their Maass wave form. 
By noncuspidality of $u_{\mathrm{L}}(z)$, it can be defined on a certain dense  subset of $\mathbb{Q}$, not the whole $\mathbb{Q}$. 
\begin{prop}[Li-Ngo-Rhoades, \cite{li13}]
Let 
$$
S_{\iota} = \{x\in \mathbb{Q}:\,\gamma x=\iota\text{ for some }\gamma\in \varGamma_{0}(4)\}, \;\iota\in \mathbb{Q}\cup\{\infty\}
$$
be subsets of $\mathbb{Q}$. 
Define $f_{\mathrm{L}}:S_{0}\cup S_{\infty}\to \mathbb{C}$ as
\begin{align}
\label{fLdefn}
f_{\mathrm{L}}(x):= \begin{cases}
q^{1/8}W_{1}(q) & \text{if }x\in S_{0} \\
q^{1/8} W_{2}(q^{-1}) & \text{if }x\in S_{\infty} 
\end{cases}
\end{align}
Then $f_{\mathrm{L}}$ satisfies the functional equation
\begin{align}
\label{fLfeq}
f_{\mathrm{L}}(x+1) = \zeta_{8}f_{\mathrm{L}}(x), \quad f_{\mathrm{L}}(x)-\zeta_{8}^{-1} \frac{1}{|4x+1|}f_{\mathrm{L}}\left(\frac{x}{4x+1}\right) = h_{L}(x)
\end{align}
for any $x\in S_{0}\cup S_{\infty}$, where $h_{L}:\mathbb{R}\to \mathbb{C}$ is a smooth function on $\mathbb{R}$ which is real-analytic except at $x=-1/4$. 
\end{prop}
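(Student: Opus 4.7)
My plan is to mirror Zagier's proof for $f_{\mathrm{C}}$ in \cite{za10}, adapting it to the $\varGamma_{0}(4)$ setting with the multiplier $\nu_{\mathrm{L}}$ and the Maass form $u_{\mathrm{L}}$.

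The first identity $f_{\mathrm{L}}(x+1)=\zeta_{8}f_{\mathrm{L}}(x)$ is essentially formal. Both $W_{1}(q)$ and $W_{2}(q^{-1})$ depend on $x$ only through $q=e^{2\pi i x}$, which is invariant under $x\mapsto x+1$, while the prefactor $q^{1/8}=e^{2\pi i x/8}$ picks up exactly $\zeta_{8}$. Since $T=\sm{1&1\\0&1}\in \varGamma_{0}(4)$, both $S_{0}$ and $S_{\infty}$ are preserved by $x\mapsto x+1$, so the two branches of the definition remain mutually consistent.

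For the three-term equation coming from $R'=\sm{1&0\\4&1}$, I would invoke the Lewis--Zagier correspondence between Maass wave forms and period functions, extended to accommodate the nontrivial multiplier system. Concretely, one attaches to $u_{\mathrm{L}}$ a period-type function
$$
\psi_{\mathrm{L}}(\zeta) = \int_{0}^{i\infty}[u_{\mathrm{L}}, R_{\zeta}^{1/2}](z)
$$
on $\cc\setminus \rr$; the modularity of $u_{\mathrm{L}}$ under $R'$, together with the cocycle identity for the Green's form $[\cdot,\cdot]$, directly produces a functional equation of the shape
$$
\psi_{\mathrm{L}}(\zeta)-\zeta_{8}^{-1}(4\zeta+1)^{-1}\psi_{\mathrm{L}}\left(\frac{\zeta}{4\zeta+1}\right)=r_{\mathrm{L}}(\zeta),
$$
where $r_{\mathrm{L}}$ is the explicit integral picked up by shifting the contour $[0,i\infty]$ across the $R'$-orbit. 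By Lewis--Zagier regularity, $r_{\mathrm{L}}$ extends smoothly across $\rr$ and is real-analytic away from the fixed point of the contour-shift, which in this normalization is precisely $R'^{-1}(\infty)=-1/4$; this is where $h_{\mathrm{L}}$ inherits its single non-analytic point.

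The main obstacle, and the heart of the argument, is identifying the radial boundary values of $\psi_{\mathrm{L}}$ (suitably normalized) with $f_{\mathrm{L}}$ on $S_{0}\cup S_{\infty}$. Here one uses the $q$-hypergeometric expressions for $W_{1}$ and $W_{2}$ together with the Hecke-character identity $T_{\mathrm{L}}(n)=\sum_{N(\mathfrak{a})=|n|}\chi_{L}(\mathfrak{a})$ to express the radial limit of $u_{\mathrm{L}}(x+iy)$ as $y\to 0^{+}$, at a rational $x$, as the finite sums arising from $q^{1/8}W_{1}(q)$ on $S_{0}$ and from $q^{1/8}W_{2}(q^{-1})$ on $S_{\infty}$. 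This matching step is parallel to Cohen's identification of $\sigma,\sigma^{*}$ with the boundary values of $u_{\mathrm{C}}$ and requires Mordell-type integral evaluations of the same flavor, with $\qq(\sqrt{2})$ replacing $\qq(\sqrt{6})$. The subtle point I expect to dominate the work is that the two branches of $f_{\mathrm{L}}$ are intrinsically attached to the two different cusps $0$ and $\infty$ of $\varGamma_{0}(4)$, and one must check that the boundary matching is globally consistent even though $u_{\mathrm{L}}$ is non-cuspidal at the third cusp $1/2$; this is permissible precisely because $S_{1/2}\cap(S_{0}\cup S_{\infty})=\emptyset$, so the non-cuspidal cusp contributes nothing to the boundary values where $f_{\mathrm{L}}$ is defined, but it still influences the smoothness (not analyticity) of $h_{\mathrm{L}}$ at $-1/4$.
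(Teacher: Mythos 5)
Your outline follows the same route the paper itself takes for this proposition: the paper does not reprove it but defers to \cite{li13} and to Zagier's treatment of $f_{\mathrm{C}}$ via the Lewis--Zagier period theory, which is exactly your plan. The periodicity step, the derivation of the three-term relation from the cocycle integral $\int_{-1/4}^{i\infty}[u_{\mathrm{L}},R_{x}^{1/2}]$, and the identification of the boundary-value matching of the period function with $q^{1/8}W_{1}(q)$ on $S_{0}$ and $q^{1/8}W_{2}(q^{-1})$ on $S_{\infty}$ as the substantive input are all consistent with the intended argument (that last step is asserted rather than carried out, but the paper likewise imports it from \cite{li13}).

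One correction to a side claim: the non-analyticity of $h_{L}$ at $x=-1/4$ has nothing to do with the non-cuspidal cusp $1/2$. The point $-1/4$ is $\gamma^{-1}(\infty)=-d/c$ for $\gamma=\left(\begin{smallmatrix}1&0\\4&1\end{smallmatrix}\right)$ --- note it is the preimage of $\infty$, not a fixed point of $\gamma$ --- and since its denominator is divisible by $4$ it lies in $S_{\infty}$, not $S_{1/2}$. The failure of analyticity there is purely the endpoint singularity of the contour $\int_{-d/c}^{i\infty}$, exactly as $h_{C}$ fails to be analytic at $-1/2$ even though Cohen's form is cuspidal at every cusp. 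The non-cuspidality of $u_{\mathrm{L}}$ at $1/2$ enters only in restricting the domain of $f_{\mathrm{L}}$ to $S_{0}\cup S_{\infty}$; it plays no role in the regularity of $h_{L}$.
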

Note that in \cite{li13}, Li, Ngo and Rhoades proved the following $q$-series identity
$$
W_{1}(q) = \sum_{n=0}^{\infty} \frac{(q;q^{2})_{n}(-q)^{n}}{(-q^{2};q^{2})_{n}}
$$
which makes $W_{1}(q)$ a finite sum whenever $x\in S_{0}$. $W_{2}(q)$ also becomes a finite sum if $x\in S_{\infty}$. 

In the case of modular forms, we can also attach  quantum modular forms which are called \emph{Eichler integrals} (see \cite{br16}, \cite{za10} for some examples).  For general holomorphic automorphic forms on the complex upper-half plane (with complex weight and parabolic cocycles), it is possible to study such integrals as a cohomological point of view. 
(For details, see \cite{br18}.) 

\emph{Remark 1.} In \cite{za10}, there is a minor erratum : we have to take an absolute value on $2x+1$. 

\emph{Remark 2.} In \cite{li13}, there are several problems with the original  definition of $f_{\mathrm{L}}(x)$. It should be a quantum modular form of weight 1 on $\varGamma_{0}(4)$, with a multiplier system $\nu_{\mathrm{C}}$, which is $\nu_{0}$ following the notation in \cite{li13}, not $\nu_{1}$. We fixed this as above.


\subsection{Hecke operators with nontrivial multiplier system}

In case of modular forms with nontrivial multiplier systems (such as theta function and eta function), Hecke operators acting on such modular forms  were studied by Wohlfahrt in \cite{wo57} first. 
By using the same idea, we can also define Hecke operators on the space of quantum modular forms with a nontrivial multiplier system, and this was studied in \cite{le18}. We interpreted Wohlfahrt's definition in the slightly different way. First, we recall the definition of compatibility of multiplier systems. 

\begin{definition}
Let $\varGamma\leq \SL_{2}(\mathbb{Z})$ be a finite index subgroup of $\SL_{2}(\mathbb{Z})$, $\nu, \nu':\varGamma\to \mathbb{S}^{1}$ be any two multiplier systems, and $\alpha\in \GL_{2}^{+}(\mathbb{Q})$. We say two multiplier systems compatible at $\alpha$ if the function $c_{\nu, \nu'}:\varGamma\alpha\varGamma\to \mathbb{S}^{1}$ defined by 
$$
c_{\nu, \nu'}(\gamma_{1}\alpha\gamma_{2}) = \nu(\gamma_{1})\nu'(\gamma_{2})
$$
is a well-defined function, i.e. for any element $\gamma_{1}\alpha\gamma_{2} = \delta_{1}\alpha\delta_{2}$ in $\varGamma\alpha\varGamma$, we have
$$
\nu(\gamma_{1})\nu'(\gamma_{2}) = \nu(\delta_{1})\nu'(\delta_{2}). 
$$
\end{definition}
It is easy to check that $\nu$ and $\nu'$ are compatible at $\alpha$ if and only if $\nu(\gamma) = \nu'(\alpha^{-1}\gamma\alpha)$ for every  $\gamma\in \varGamma\cap \alpha\varGamma\alpha^{-1}$. 
Using this function, we can define a Hecke operator on the space of quantum modular forms which changes multiplier system. 
\begin{theorem}[Lee, \cite{le18}]
\label{lee}
Let $\alpha\in \GL_{2}^{+}(\qq)$ and $\varGamma\leq\SL_{2}(\zz)$ be a congruence subgroup. 
Let $\{\varGamma\beta_{j}=\varGamma\alpha_{j}\alpha\alpha_{j}'\}_{j\in J}$ be the (finite) set of representatives of orbits $\varGamma\backslash \varGamma\alpha\varGamma$. 
Suppose two multiplier systems $\nu, \nu':\varGamma\to\mathbb{S}^{1}$ are compatible at $\alpha$. Then we have an operator $T_{\alpha, \nu, \nu'}:\QQ_{k}(\varGamma, \nu)\to \QQ_{k}(\varGamma, \nu')$ defined as
$$
T^{\infty}_{\alpha, \nu, \nu'}f=\sum_{j}c_{\nu, \nu'}(\beta_{j})^{-1}f|_{k}\beta_{j}
$$
for $f\in \QQ_{k}(\varGamma, \nu)$.  
Here $|_{k}$ is a weight $k$ slash operator satisfying  $f|_{k}\gamma_{1}\gamma_{2}=(f|_{k}\gamma_{1})|_{k}\gamma_{2}$ for any $\gamma_{1}, \gamma_{2}\in \GL_{2}^{+}(\qq)$. 
\end{theorem}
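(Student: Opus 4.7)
The plan is to verify the transformation law for $Tf := T^{\infty}_{\alpha,\nu,\nu'}f$ directly, using two ingredients: the compatibility of $\nu,\nu'$ at $\alpha$, and the quantum-modular defect $f - f|_{k,\nu}\delta = h_\delta$. First I would observe that compatibility extends multiplicatively from $\alpha$ to the whole double coset: for any $\beta\in\varGamma\alpha\varGamma$ and $\delta_1,\delta_2\in\varGamma$,
\[
c_{\nu,\nu'}(\delta_1\beta\delta_2) = \nu(\delta_1)\,c_{\nu,\nu'}(\beta)\,\nu'(\delta_2),
\]
which follows immediately from the defining formula $c_{\nu,\nu'}(\gamma_1\alpha\gamma_2) = \nu(\gamma_1)\nu'(\gamma_2)$ and the hypothesis that $c_{\nu,\nu'}$ is well defined on $\varGamma\alpha\varGamma$.

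Next, fix $\gamma\in\varGamma$. Since right multiplication by $\gamma$ permutes the right cosets $\varGamma\backslash\varGamma\alpha\varGamma$, there exist a permutation $\sigma$ of $J$ and elements $\delta_j\in\varGamma$ with $\beta_j\gamma = \delta_j\beta_{\sigma(j)}$. Applying the extended compatibility to $\beta_j\gamma$ in its two factorizations yields the cancellation identity
\[
c_{\nu,\nu'}(\beta_j)\,\nu'(\gamma) = \nu(\delta_j)\,c_{\nu,\nu'}(\beta_{\sigma(j)}).
\]
Using the cocycle property of $|_k$ together with the rewriting $f|_k\delta_j = \nu(\delta_j)(f - h_{\delta_j})$ coming from $f|_{k,\nu}\delta_j = f - h_{\delta_j}$, I would then compute
\[
(Tf)|_{k,\nu'}\gamma = \nu'(\gamma)^{-1}\sum_{j} c_{\nu,\nu'}(\beta_j)^{-1}\,\nu(\delta_j)\,(f - h_{\delta_j})|_k\beta_{\sigma(j)}.
\]
Substituting the cancellation identity and reindexing by $\sigma$ collapses this to
\[
(Tf)|_{k,\nu'}\gamma = Tf - \sum_{j} c_{\nu,\nu'}(\beta_{\sigma(j)})^{-1}\, h_{\delta_j}|_k\beta_{\sigma(j)}.
\]

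Finally I would check that the error term extends smoothly to $\rr$ outside a finite set. Each $h_{\delta_j}$ does by hypothesis, and $|_k\beta_{\sigma(j)}$ only precomposes with a Möbius transformation in $\GL_2^+(\qq)$ and multiplies by the real-analytic factor $|cx+d|^{-k}$, so this property is preserved. The sum being finite, we conclude $Tf\in \QQ_k(\varGamma,\nu')$. The analogous calculation with $\gamma$ trivial but a different choice $\delta\beta_j$ of coset representative shows that changing representatives alters $Tf$ only by a smooth error, so $T$ is well defined as an operator into $\QQ_k(\varGamma,\nu')$. The main technical obstacle is keeping the three multiplier systems straight so that every prefactor cancels cleanly; the compatibility hypothesis is precisely what makes this bookkeeping work.
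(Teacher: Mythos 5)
The paper itself gives no argument here---its ``proof'' is just a citation to \cite{le18}---and your reconstruction is the standard one used there: extend $c_{\nu,\nu'}$ multiplicatively over the double coset, use that right multiplication by $\gamma\in\varGamma$ permutes the cosets $\varGamma\beta_j$, cancel the prefactors via $c_{\nu,\nu'}(\beta_j)\nu'(\gamma)=\nu(\delta_j)c_{\nu,\nu'}(\beta_{\sigma(j)})$, and observe that the leftover defect $\sum_j c_{\nu,\nu'}(\beta_{\sigma(j)})^{-1}h_{\delta_j}|_k\beta_{\sigma(j)}$ still extends smoothly off a finite subset of $\qq$ because $|_k\beta$ preserves that property. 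Your computation is correct and complete (given the paper's convention that $\nu,\nu'$ are genuine homomorphisms), so I have nothing to add beyond noting that the well-definedness remark at the end is a bonus rather than a required step, since the theorem fixes a choice of representatives.
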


\begin{proof}
See \cite{le18}.
\end{proof}

Especially, we are interested in the case when $\varGamma = \varGamma_{0}(N)$ and $\alpha = \alpha_{p}:= \smat{1}{0}{0}{p}$ where $p\nmid N$ is a prime. In this case, $\varGamma \cap \alpha\varGamma\alpha^{-1} = \varGamma_{0}(pN)$ and coset representatives of $\varGamma\backslash \varGamma\alpha\varGamma$ can be chosen as 
$$
\beta_{0} = \pmat{1}{0}{0}{p} , \quad\beta_{1} = \pmat{1}{1}{0}{p}, \cdots ,\quad\beta_{p-1}=\pmat{1}{p-1}{0}{p}, \quad \beta_{\infty} = \pmat{p}{0}{0}{1}
$$
(For the proof, see \cite{di05}.)
Using this, we constructed new quantum modular forms in \cite{le18} by applying Hecke operators on  Zagier's quantum modular form $f_{\mathrm{C}}(x)$. 
Recall that the quantum modular form $f_{\mathrm{C}}:\mathbb{Q}\to \mathbb{C}$ is in $\mathcal{Q}_{1}(\varGamma_{0}(2), \nu_{\mathrm{C}})$. 
By using SAGE, we made a program to compute the value of $\nu_{\mathrm{C}}(\gamma)$ for any given $\gamma\in \varGamma_{0}(2)$. We have checked that $\nu_{\mathrm{C}}$ and $\nu'=\nu_{\mathrm{C}}^{p}$ are compatible at $\alpha_p$ for any prime $5\leq p\leq 757$, i.e. $\nu_{\mathrm{C}}(\gamma) = \nu_{\mathrm{C}}(\alpha^{-1}\gamma\alpha)^{p}$ for any $\gamma\in \varGamma_{0}(2p)$ by compute its values on generators of $\varGamma_{0}(2p)$. 
From $c_{\nu_{\mathrm{C}}, \nu_{\mathrm{C}}^{p}}(\beta_{\infty}) = (-1)^{\frac{p^{2}-1}{24}}$ and $c_{\nu_{\mathrm{C}}, \nu_{\mathrm{C}}^{p}}(\beta_{j}) = \zeta_{24}^{pj}$ for $0\leq j\leq p-1$ (this is proved in Chapter 3 - see Lemma 3.2.), we have the following corollary. 

\begin{corollary}
For the Zagier's quantum modular form $f_{\mathrm{C}}\in \mathcal{Q}_{1}(\varGamma_{0}(2), \nu_{\mathrm{C}})$, the function 
$$
T_{p}^{\infty}f_{\mathrm{C}}(x):= T_{\alpha_{p}, \nu_{\mathrm{C}}, \nu_{\mathrm{C}}^{p}}^{\infty} f_{\mathrm{C}}(x)=(-1)^{\frac{p^{2}-1}{24}}f_{\mathrm{C}}(px)+\frac{1}{p}\sum_{j=0}^{p-1}\zeta_{24}^{-pj}f_{\mathrm{C}}\left(\frac{x+j}{p}\right)
$$
is in $ \mathcal{Q}_{1}(\varGamma_{0}(2), \nu_{\mathrm{C}}^{p})$ for any $5\leq p\leq 757$. Here $T_{p}^{\infty} = T_{\alpha_{p}, \nu_{\mathrm{C}}, \nu_{\mathrm{C}}^{p}}^{\infty}$.
\end{corollary}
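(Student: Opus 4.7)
The plan is to invoke Theorem \ref{lee} directly with $\varGamma = \varGamma_0(2)$, $\alpha = \alpha_p = \smat{1}{0}{0}{p}$, $\nu = \nu_{\mathrm{C}}$, and $\nu' = \nu_{\mathrm{C}}^p$, then simplify the resulting sum using the explicit coset representatives $\beta_0,\dots,\beta_{p-1},\beta_\infty$ listed just before the corollary. Two things must be checked before the machinery applies: first, that $\nu_{\mathrm{C}}$ and $\nu_{\mathrm{C}}^p$ are compatible at $\alpha_p$, and second, the explicit values of $c_{\nu_{\mathrm{C}},\nu_{\mathrm{C}}^p}(\beta_j)$ on every representative.

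For compatibility, the remark after the definition reduces the problem to the identity $\nu_{\mathrm{C}}(\gamma) = \nu_{\mathrm{C}}(\alpha_p^{-1}\gamma\alpha_p)^p$ for every $\gamma \in \varGamma_0(2)\cap \alpha_p\varGamma_0(2)\alpha_p^{-1} = \varGamma_0(2p)$. Since both sides are homomorphisms, it is enough to verify this on a chosen finite generating set of $\varGamma_0(2p)$; this is precisely what the text reports having done with SAGE in the range $5\leq p\leq 757$, and is the reason the conclusion is stated only in that range. The hard part — and the reason the corollary is not stated for all primes — lies exactly here: a theoretical, uniform-in-$p$ proof of compatibility would require tracking the multiplier $\nu_{\mathrm{C}}$ on Ramanujan-style generators of $\varGamma_0(2p)$, for which no closed form is produced in the paper.

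For the coefficients, decompose $\beta_j = I\cdot \alpha_p \cdot T^j$ with $T = \smat{1}{1}{0}{1}\in \varGamma_0(2)$, so that by the definition of $c_{\nu_{\mathrm{C}},\nu_{\mathrm{C}}^p}$ one immediately obtains
$$
c_{\nu_{\mathrm{C}},\nu_{\mathrm{C}}^p}(\beta_j) = \nu_{\mathrm{C}}(I)\,\nu_{\mathrm{C}}^p(T^j) = \zeta_{24}^{pj}\qquad (0\leq j\leq p-1).
$$
For $\beta_\infty = \smat{p}{0}{0}{1}$ one first exhibits an explicit decomposition $\beta_\infty = \gamma_1\alpha_p\gamma_2$ in $\varGamma_0(2)\alpha_p\varGamma_0(2)$; concretely, choosing any $\gamma_2 = \smat{a_2}{b_2}{c_2}{d_2}\in\varGamma_0(2)$ with $p\mid a_2$ forces $\gamma_1 = \smat{pd_2}{-b_2}{-c_2}{a_2/p}$ to lie in $\varGamma_0(2)$, and the value $\nu_{\mathrm{C}}(\gamma_1)\nu_{\mathrm{C}}(\gamma_2)^p$ is then computed via the generators $T,R$ of $\varGamma_0(2)$ together with Theorem \ref{cothm}. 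This calculation is the content of the Lemma 3.2 cited in the text and yields $c_{\nu_{\mathrm{C}},\nu_{\mathrm{C}}^p}(\beta_\infty) = (-1)^{(p^2-1)/24}$.

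Finally, plug these values into the formula of Theorem \ref{lee}, using the weight-one action $f_{\mathrm{C}}|_1 \beta_j(x) = p^{-1} f_{\mathrm{C}}((x+j)/p)$ and $f_{\mathrm{C}}|_1 \beta_\infty(x) = f_{\mathrm{C}}(px)$, and invert the sign $(-1)^{(p^2-1)/24}$ (which equals its own inverse). This reproduces
$$
T_p^\infty f_{\mathrm{C}}(x) = (-1)^{(p^2-1)/24} f_{\mathrm{C}}(px) + \frac{1}{p}\sum_{j=0}^{p-1}\zeta_{24}^{-pj}f_{\mathrm{C}}\!\left(\tfrac{x+j}{p}\right),
$$
and by Theorem \ref{lee} the result lies in $\QQ_1(\varGamma_0(2),\nu_{\mathrm{C}}^p)$, completing the proof.
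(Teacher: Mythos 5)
Your proposal is correct and follows essentially the same route as the paper: the corollary is obtained by feeding the SAGE-verified compatibility of $\nu_{\mathrm{C}}$ and $\nu_{\mathrm{C}}^{p}$ at $\alpha_{p}$ (for $5\leq p\leq 757$, verified on generators of $\varGamma_{0}(2p)$) and the values $c_{\nu_{\mathrm{C}},\nu_{\mathrm{C}}^{p}}(\beta_{j})=\zeta_{24}^{pj}$, $c_{\nu_{\mathrm{C}},\nu_{\mathrm{C}}^{p}}(\beta_{\infty})=(-1)^{(p^{2}-1)/24}$ from the lemma into Theorem \ref{lee}. Your general decomposition $\beta_{\infty}=\gamma_{1}\alpha_{p}\gamma_{2}$ with $p\mid a_{2}$ is a mild abstraction of the paper's explicit word $\beta_{\infty}=\pm T^{(p+1)/2}R^{-1}\alpha_{p}T^{(p+1)/2}R^{-1}$, but the argument is the same.
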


Actually, in the following chapter, we will show that these are just constant multiples of $f_{\mathrm{C}}(x)$, i.e. $f_{\mathrm{C}}$ is a Hecke eigenform with respect to these operators. In this sense, \emph{these quantum modular forms are not new}.

In case of Maass wave forms, Str\"omberg studied such Hecke operators in his Ph.D. thesis \cite{st05} and other papers \cite{st07}, \cite{st08}, based on Wohlfahrt's idea. The only difference between the Hecke operators on the space of quantum modular forms and Maass wave forms is just normalization by a constant. 

\begin{definition}
Let $\alpha\in \GL_{2}^{+}(\mathbb{Q})$ and $\varGamma\leq \SL_{2}(\mathbb{Z})$ be a congruence subgroup. Let $\{\varGamma\beta_{j} = \varGamma\alpha_{j}\alpha\alpha_{j}'\}_{j\in J}$ be the set of representatives of orbits $\varGamma\backslash \varGamma\alpha\varGamma$. Suppose two multiplier systems $\nu, \nu':\varGamma\to \mathbb{S}^{1}$ are compatible at $\alpha$.
Let $\mathcal{M}_{s}(\varGamma, \nu)$ be a space of Maass wave forms on $\varGamma$ with  an eigenvalue $\lambda = s(1-s)$ and a multiplier system $\nu$. For any $u\in \mathcal{M}_{s}(\varGamma, \nu)$, define 
$$
T_{\alpha, \nu, \nu'}u = \det(\alpha)^{-1/2}\sum_{j\in J} c_{\nu, \nu'}(\beta_{j})^{-1} u|\beta_{j},
$$
where $|$ is a slash operator defined by 
$$
(u|\gamma)(z) := u(\gamma z) = u\left(\frac{az+b}{cz+d}\right), \quad \gamma = \pmat{a}{b}{c}{d}\in \mathrm{GL}_{2}^{+}(\mathbb{Q}).
$$
\end{definition}
By using the same argument in the proof of the Theorem 4.2 in \cite{le18}, we can easily check that the following proposition holds, so we omit the proof. Note that the Laplacian $\Delta$ commutes with the slash operator. 
\begin{prop}
$T_{\alpha, \nu, \nu'}$ is a linear map from $\mathcal{M}_{s}(\varGamma, \nu)$ to $\mathcal{M}_{s}(\varGamma, \nu')$. Also, image of $\mathcal{S}_{s}(\varGamma, \nu)$ is contained in $\mathcal{S}_{s}(\varGamma, \nu')$, where $\mathcal{S}_{s}(\varGamma, \nu)$ is a space of Maass cusp forms in $\mathcal{M}_{s}(\varGamma, \nu)$. 
\end{prop}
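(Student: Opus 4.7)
The plan is to check the three claims in turn: linearity, preservation of the eigenvalue and the transformation law under $\varGamma$, and the cuspidality statement. Linearity is immediate from the definition, since $T_{\alpha,\nu,\nu'}$ is a finite sum of slash operators with scalar coefficients. For the eigenvalue, I would use the fact, already noted in the excerpt, that $\Delta$ commutes with the slash operator $|$ coming from $\mathrm{GL}_2^+(\mathbb{Q})$; applying $\Delta$ termwise to the definition of $T_{\alpha,\nu,\nu'}u$ then just pulls out the eigenvalue $s(1-s)$ from each $u|\beta_j$.

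The main content is the transformation law: for every $\gamma\in\varGamma$ I must show $(T_{\alpha,\nu,\nu'}u)|\gamma=\nu'(\gamma)\,T_{\alpha,\nu,\nu'}u$. The standard trick is that right multiplication by $\gamma$ permutes the cosets $\varGamma\backslash \varGamma\alpha\varGamma$, so there is a permutation $\sigma$ of the index set $J$ and elements $\gamma_j\in\varGamma$ with $\beta_j\gamma=\gamma_j\beta_{\sigma(j)}$. Using this and the multiplier property $u|\gamma_j\beta_{\sigma(j)}=\nu(\gamma_j)\,u|\beta_{\sigma(j)}$, the sum rearranges to a sum indexed by $\sigma(j)$, and what remains is to verify the scalar identity
\[
c_{\nu,\nu'}(\beta_j)^{-1}\nu(\gamma_j)=\nu'(\gamma)\,c_{\nu,\nu'}(\beta_{\sigma(j)})^{-1}.
\]
This is where compatibility at $\alpha$ is essential. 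Writing $\beta_j=\alpha_j\alpha\alpha_j'$ and applying $c_{\nu,\nu'}$ to the two equal expressions $\beta_j\gamma=\alpha_j\alpha(\alpha_j'\gamma)$ and $\gamma_j\beta_{\sigma(j)}=(\gamma_j\alpha_{\sigma(j)})\alpha\alpha_{\sigma(j)}'$ gives, by the defining property of $c_{\nu,\nu'}$, the identity $c_{\nu,\nu'}(\beta_j)\nu'(\gamma)=\nu(\gamma_j)c_{\nu,\nu'}(\beta_{\sigma(j)})$, which is exactly what is needed. This compatibility step is the only part with nontrivial bookkeeping, so I expect it to be the main place where one has to be careful; once it is in hand, collecting the factor $\nu'(\gamma)$ out of the sum and reindexing by $\sigma$ finishes the proof of $T_{\alpha,\nu,\nu'}u\in\mathcal{M}_s(\varGamma,\nu')$.

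For the last assertion, suppose $u\in\mathcal{S}_s(\varGamma,\nu)$, so $u$ decays at every cusp of $\varGamma$. Each slashed function $u|\beta_j$ is a Maass form (with eigenvalue $s(1-s)$) for the conjugated group $\beta_j^{-1}\varGamma\beta_j\cap\varGamma$, and since $\beta_j\in\mathrm{GL}_2^+(\mathbb{Q})$ sends cusps of $\varGamma$ to cusps of $\varGamma$ in $\mathbb{Q}\cup\{\infty\}$, the decay condition is preserved under each individual slash. Hence the finite linear combination $T_{\alpha,\nu,\nu'}u$ vanishes at every cusp of $\varGamma$, so it lies in $\mathcal{S}_s(\varGamma,\nu')$. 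Since the argument is essentially identical to the quantum modular form version in \cite{le18}, I would only record the compatibility computation explicitly and refer back to \cite{le18} for the remaining routine verifications.
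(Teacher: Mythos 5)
Your proposal is correct and is exactly the argument the paper has in mind: the paper itself omits the proof, pointing to Theorem 4.2 of \cite{le18} and the commutation of $\Delta$ with the slash operator, and your coset-permutation computation together with the identity $c_{\nu,\nu'}(\beta_j)\nu'(\gamma)=\nu(\gamma_j)c_{\nu,\nu'}(\beta_{\sigma(j)})$ (which is precisely where well-definedness of $c_{\nu,\nu'}$, i.e.\ compatibility at $\alpha$, is used) is the standard Wohlfahrt-style verification being referenced. The cuspidality argument via $\beta_j$ preserving $\mathbb{Q}\cup\{\infty\}$ is likewise the intended routine step.
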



\section{Main results}

\subsection{Cohen's and Li-Ngo-Rhoades' Maass wave forms are Hecke eigenforms}

Now we prove that Cohen's and Li-Ngo-Rhoades' Maass wave forms defined in \cite{co88} and \cite{li13} are Hecke eigenforms with respect to Str\"omberg's Hecke operators. First, we find the candidate $\nu'$ which is the multiplier system compatible with $\nu$ at $\alpha_{p} = \left(\begin{smallmatrix}1&0\\0&p\end{smallmatrix}\right)$. 
\begin{lemma}
Let $p\geq 5$ be a  prime and $\nu':\varGamma_{0}(2)\to \mathbb{S}^{1}$ be a multiplier system. If $\nu_{\mathrm{C}}:\varGamma_{0}(2)\to \mathbb{S}^{1}$ is the multiplier system associated with $u_{\mathrm{C}}(z)$ (see Theorem \ref{cothm}) and $\nu_{\mathrm{C}}$ and $\nu'$ are compatible at $\alpha_{p}  = \smat{1}{0}{0}{p}$, then $\nu' =\nu_{\mathrm{C}}^{p}$. Also, if they are compatible at $\alpha_{p}$, then we have 
\begin{align*}
c_{\nu_{\mathrm{C}}, \nu_{\mathrm{C}}^{p}}(\beta_{j}) &= \zeta_{24}^{pj} \quad (0\leq j\leq p-1),\\
c_{\nu_{\mathrm{C}}, \nu_{\mathrm{C}}^{p}}(\beta_{\infty}) &= (-1)^{\frac{p^{2}-1}{24}}. 
\end{align*}
\end{lemma}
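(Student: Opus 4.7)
The plan is to translate the compatibility condition into an identity on the subgroup $\varGamma_0(2)\cap\alpha_p\varGamma_0(2)\alpha_p^{-1}=\varGamma_0(2p)$, determine $\nu'$ on a generating set of $\varGamma_0(2)$, and finally factor each coset representative in the form $\gamma_1\alpha_p\gamma_2$ to read off $c_{\nu_{\mathrm{C}},\nu_{\mathrm{C}}^p}$ from the defining formula $\nu_{\mathrm{C}}(\gamma_1)\nu_{\mathrm{C}}^p(\gamma_2)$.

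First I would use that compatibility at $\alpha_p$ amounts to $\nu_{\mathrm{C}}(\gamma)=\nu'(\alpha_p^{-1}\gamma\alpha_p)$ for every $\gamma\in\varGamma_0(2p)$. Since $\varGamma_0(2)$ is generated by $T$, $R$, and $-I$, I evaluate on three convenient elements: $T\in\varGamma_0(2p)$ conjugates to $\alpha_p^{-1}T\alpha_p=T^p$, giving $\nu'(T)^p=\zeta_{24}$; $R^p=\smat{1}{0}{2p}{1}\in\varGamma_0(2p)$ (note $R$ itself is not) conjugates to $R$, giving $\nu'(R)=\zeta_{24}^p$; and $-I$ is fixed under conjugation, giving $\nu'(-I)=1$. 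Because $p\ge 5$ and $p^2\equiv 1\pmod{24}$, the element $\zeta_{24}^p\in\mu_{24}$ is the unique 24-th root of unity whose $p$-th power is $\zeta_{24}$; under the standing convention that $\nu'$ takes values in $\mu_{24}$ (as $\nu_{\mathrm{C}}$ does), this forces $\nu'(T)=\zeta_{24}^p$, and matching on all generators gives $\nu'=\nu_{\mathrm{C}}^p$.

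For $0\le j\le p-1$ the factorization $\beta_j=\alpha_p\cdot T^j$ is immediate from $\smat{1}{0}{0}{p}\smat{1}{j}{0}{1}=\smat{1}{j}{0}{p}$, so $c_{\nu_{\mathrm{C}},\nu_{\mathrm{C}}^p}(\beta_j)=\nu_{\mathrm{C}}(I)\nu_{\mathrm{C}}^p(T^j)=\zeta_{24}^{pj}$. For $\beta_\infty$ I would solve $pd-2b=1$ with $d=1$, $b=(p-1)/2$ and set $\gamma_*=T^{(p-1)/2}R=\smat{p}{(p-1)/2}{2}{1}$ and $\gamma_*'=RT^{(p-1)/2}=\smat{1}{(p-1)/2}{2}{p}$ (both in $\varGamma_0(2)$). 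A direct matrix check yields $\alpha_p\gamma_*=\gamma_*'\beta_\infty$, so $\beta_\infty=(\gamma_*')^{-1}\alpha_p\gamma_*$, whence
\[
c_{\nu_{\mathrm{C}},\nu_{\mathrm{C}}^p}(\beta_\infty)=\nu_{\mathrm{C}}(\gamma_*')^{-1}\nu_{\mathrm{C}}^p(\gamma_*)=\zeta_{24}^{-(p+1)/2}\cdot\zeta_{24}^{p(p+1)/2}=\zeta_{24}^{(p^2-1)/2}=(-1)^{(p^2-1)/24},
\]
the last step using $\zeta_{24}^{12}=-1$ and $24\mid p^2-1$.

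The main obstacle I expect is pinning down $\nu'(T)=\zeta_{24}^p$ rather than merely a $p$-th root of $\zeta_{24}$: the elements of $\varGamma_0(2p)$ one naturally writes down all land in the sublattice $p\mathbb{Z}\oplus\mathbb{Z}\subseteq\varGamma_0(2)^{\mathrm{ab}}\cong\mathbb{Z}^2$ (generated by $T^p$ and $R$), so the compatibility condition alone permits any of $p$ possible values for $\nu'(T)$. Ruling out all but one requires either the finite-order/cyclotomic convention used above, or the explicit production of an element $\gamma\in\varGamma_0(2p)$ whose image in $\varGamma_0(2)^{\mathrm{ab}}$ has $T$-component coprime to $p$; for example, for $p=5$ the element $TRT^{-1}R^{-1}T=\smat{7}{5}{4}{3}\in\varGamma_0(10)$ has $T$-exponent $1$ and does the job, and producing such elements uniformly for all $p\ge 5$ is the genuine computational content of the argument.
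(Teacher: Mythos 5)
Your computation of $c_{\nu_{\mathrm{C}},\nu_{\mathrm{C}}^{p}}(\beta_{j})$ for $0\le j\le p-1$ and of $c_{\nu_{\mathrm{C}},\nu_{\mathrm{C}}^{p}}(\beta_{\infty})$ is correct and is essentially the paper's method; your factorization $\beta_{\infty}=(\gamma_{*}')^{-1}\alpha_{p}\gamma_{*}$ with $\gamma_{*}=T^{(p-1)/2}R$ is a perfectly good alternative to the paper's $\beta_{\infty}=T^{(p+1)/2}R^{-1}\alpha_{p}T^{(p+1)/2}R^{-1}$, and both give $\zeta_{24}^{(p^{2}-1)/2}=(-1)^{(p^{2}-1)/24}$.

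The genuine gap is the one you flag yourself: the determination $\nu'(T)=\zeta_{24}^{p}$. Conjugation by $\alpha_{p}$ alone only gives $\nu'(T)^{p}=\zeta_{24}$, and neither of your two proposed repairs is available. The ``standing convention that $\nu'$ takes values in $\mu_{24}$'' contradicts the hypothesis of the lemma, which allows an arbitrary multiplier system $\nu'\colon\varGamma_{0}(2)\to\mathbb{S}^{1}$; and the alternative of exhibiting elements of $\varGamma_{0}(2)\cap\alpha_{p}^{-1}\varGamma_{0}(2)\alpha_{p}$ with $T$-exponent coprime to $p$ is carried out only for $p=5$ and explicitly left open in general. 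The paper closes this in one line by using well-definedness of $c_{\nu_{\mathrm{C}},\nu'}$ at the \emph{other} end of the double coset: from $\beta_{\infty}T=T^{p}\beta_{\infty}$ with $\beta_{\infty}=\smat{p}{0}{0}{1}=\gamma_{1}\alpha_{p}\gamma_{2}\in\varGamma_{0}(2)\alpha_{p}\varGamma_{0}(2)$, one gets two factorizations $\gamma_{1}\alpha_{p}(\gamma_{2}T)=(T^{p}\gamma_{1})\alpha_{p}\gamma_{2}$ of the same element, and well-definedness forces $\nu'(T)=\nu_{\mathrm{C}}(T)^{p}=\zeta_{24}^{p}$ exactly (equivalently: $T^{p}\in\varGamma_{0}(2)\cap\beta_{\infty}\varGamma_{0}(2)\beta_{\infty}^{-1}$ and $\beta_{\infty}^{-1}T^{p}\beta_{\infty}=T$, so the compatibility criterion applied at the representative $\beta_{\infty}$ rather than $\alpha_{p}$ gives $\nu'(T)=\nu_{\mathrm{C}}(T^{p})$). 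The value $\nu'(R)=\zeta_{24}^{p}$ is obtained as you do, from $\beta_{0}R=R^{p}\beta_{0}$. You should replace your $\mu_{24}$/abelianization discussion with this observation; the rest of your argument then stands.
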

\begin{proof}
Since 
$$
\beta_{\infty}\pmat{1}{1}{0}{1} = \pmat{p}{p}{0}{1} = \pmat{1}{1}{0}{1}^{p} \beta_{\infty}, \quad \beta_{0}\pmat{1}{0}{2}{1} = \pmat{1}{0}{2p}{p} = \pmat{1}{0}{2}{1}^{p}\beta_{0}, 
$$
$\nu'$ should satisfy $\nu'\left(\smat{1}{1}{0}{1}\right) = \nu_{\mathrm{C}}\left(\smat{1}{1}{0}{1}\right)^{p} = \zeta_{24}^{p}$ and $\nu'\left(\smat{1}{0}{2}{1}\right) = \nu_{\mathrm{C}}\left(\smat{1}{0}{2}{1}\right)^{p} =\zeta_{24}^{p}$, hence the only candidate of $\nu'$ is $\nu_{\mathrm{C}}^{p}$. If they are compatible, then 
\begin{align*}
c_{\nu_{\mathrm{C}}, \nu_{\mathrm{C}}^{p}}(\beta_{j}) &= c_{\nu_{\mathrm{C}}, \nu_{\mathrm{C}}^{p}}\left(\alpha_{p}\pmat{1}{1}{0}{1}^{p}\right) = \zeta_{24}^{pj} \\
 c_{\nu_{\mathrm{C}}, \nu_{\mathrm{C}}^{p}}(\beta_{\infty}) &= c_{\nu_{\mathrm{C}}, \nu_{\mathrm{C}}^{p}}\left(\pmat{1}{1}{0}{1}^{\frac{p+1}{2}}\pmat{1}{0}{2}{1}^{-1}\alpha_{p}\pmat{1}{1}{0}{1}^{\frac{p+1}{2}}\pmat{1}{0}{2}{1}^{-1}\right) = (-1)^{\frac{p^{2}-1}{24}}.  
\end{align*}
Note that $24|(p^{2}-1)$ holds for any prime $p\geq 5$.
\end{proof}

Using this, we define the Hecke operators $T_{p}$ for $p\geq 5$. For some technical reasons, we have to divide into two cases: $p\equiv 1\Mod{6}$ and $p\equiv -1\Mod{6}$. 

\begin{definition}
For a prime $p\geq 5$, define the function $T_{p}u_{\mathrm{C}}:\mathcal{H}\to \mathbb{C}$ as
$$
T_{p}u_{\mathrm{C}}(z):= T_{\alpha_{p}, \nu_{\mathrm{C}}, \nu_{\mathrm{C}}^{p}}u_{\mathrm{C}}(z)  = \frac{1}{\sqrt{p}}\left((-1)^{\frac{p^{2}-1}{24}}u_{\mathrm{C}}(pz) + \sum_{j=0}^{p-1}\zeta_{24}^{-pj}u_{\mathrm{C}}\left(\frac{z+j}{p}\right)\right)
$$
For $p\equiv 1\Mod{6}$ and 
$$
T_{p}u_{\mathrm{C}}(z) := (T_{\alpha_{p}, \nu_{\mathrm{C}}, \nu_{\mathrm{C}}^{p}}u_{\mathrm{C}})(-\overline{z})= \frac{1}{\sqrt{p}}\left((-1)^{\frac{p^{2}-1}{24}}u_{\mathrm{C}}(-p\overline{z}) + \sum_{j=0}^{p-1}\zeta_{24}^{-pj}u_{\mathrm{C}}\left(\frac{-\overline{z}+j}{p}\right)\right)
$$
for $p\equiv -1\Mod{6}$. 
\end{definition}
Note that if $u(z)\in \mathcal{M}_{1/2}(\varGamma_{0}(2), \nu)$ for some multiplier system $\nu:\varGamma_{0}(2)\to \mathbb{S}^{1}$, then $u(-\overline{z}) \in \mathcal{M}_{1/2}(\varGamma_{0}(2), \nu^{-1})$.  For example, $T_{23}u_{\mathrm{C}}(z) \in \mathcal{M}_{1/2}(\varGamma_{0}(2), \nu_{\mathrm{C}}^{-23}) = \mathcal{M}_{1/2}(\varGamma_{0}(2), \nu_{\mathrm{C}})$. 
\begin{theorem}
For any prime $p\geq 5$, we have
$$
T_{p}u_{\mathrm{C}}(z) = \begin{cases} 0 & \iif  p\not\equiv  \pm1\Mod{24} \\ T_{\mathrm{C}}(\pm p)u_{\mathrm{C}}(z) &\iif   p\equiv \pm1\Mod{24}\end{cases}
$$
\end{theorem}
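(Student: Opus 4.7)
My plan is to compute the Fourier expansion of $T_p u_{\mathrm{C}}(z)$ directly from the definition and match it coefficient-by-coefficient with the claimed answer; the eigenvalue $\frac{1}{4}$ and the $\varGamma_0(2)$-transformation law are already packaged in Proposition 2.1, so everything reduces to verifying the identity at the level of the $K$-Bessel Fourier expansion.

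For $p \equiv 1 \pmod{6}$, substituting (\ref{coma}) into the defining formula and swapping the sums over $j$ and over the Fourier index produces the geometric sum
\[
\sum_{j=0}^{p-1} e^{2\pi i j(m-p^2)/(24p)},
\]
and since $p^2 \equiv 1 \equiv m \pmod{24}$ whenever $p \geq 5$ and $m \equiv 1 \pmod{24}$, this sum equals $p$ when $m \equiv p^2 \pmod{24p}$ and vanishes otherwise. Writing $m = pn$ in the surviving terms and combining with the contribution of $u_{\mathrm{C}}(pz)$, the $n$th Fourier coefficient of $T_p u_{\mathrm{C}}$ becomes
\[
b_n = (-1)^{(p^2-1)/24}\, T_{\mathrm{C}}(n/p)\,\mathbf{1}[p \mid n,\ n \equiv p \pmod{24p}] + T_{\mathrm{C}}(pn)\,\mathbf{1}[n \equiv p \pmod{24}].
\]
The support lies in $n \equiv p \pmod{24}$, which meets the support $\{n \equiv 1 \pmod{24}\}$ of $u_{\mathrm{C}}$ precisely when $p \equiv 1 \pmod{24}$; this accounts structurally for the dichotomy in the statement.

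The verification of $b_n = 0$ (for $p \equiv 7, 13, 19 \pmod{24}$) and $b_n = T_{\mathrm{C}}(p)\, T_{\mathrm{C}}(n)$ (for $p \equiv 1 \pmod{24}$) then splits into the sub-cases $p \nmid n$, where multiplicativity and the vanishing $T_{\mathrm{C}}(p) = 0$ from (\ref{TCform}) handle it immediately, and $p \mid n$. Writing $n = p^k m'$ with $p \nmid m'$, multiplicativity reduces the latter to an identity among prime-power values, which in the non-vanishing case is the Hecke-type relation
\[
(-1)^{(p^2-1)/24}\, T_{\mathrm{C}}(p^{k-1}) + T_{\mathrm{C}}(p^{k+1}) = T_{\mathrm{C}}(p)\, T_{\mathrm{C}}(p^k),
\]
and in the vanishing case reads $(-1)^{(p^2-1)/24} + T_{\mathrm{C}}(p^2) = 0$; both follow from the explicit formula (\ref{TCform}) after checking the parity of $(p^2-1)/24$ in each residue class of $p \pmod{24}$. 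For $p \equiv -1 \pmod{6}$ the argument is parallel: the extra reflection $z \mapsto -\bar z$ sends $u_{\mathrm{C}}$ to a series supported on $n \equiv -1 \pmod{24}$, so the same computation produces coefficients supported on $n \equiv -p \pmod{24}$, meeting $\{n \equiv 1 \pmod{24}\}$ exactly when $p \equiv -1 \pmod{24}$, with $T_{\mathrm{C}}(-p)$ playing the role of $T_{\mathrm{C}}(p)$.

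The main obstacle is not conceptual but the bookkeeping of the four residue classes of $p$ modulo $24$, tracked simultaneously with the sign of $n$ (the Fourier index of $u_{\mathrm{C}}$ runs over both positive and negative integers) and with the care that $T_{\mathrm{C}}$ is only multiplicative for coprime arguments, so every occurrence of $T_{\mathrm{C}}(p^k)$ must be read off (\ref{TCform}) and cannot be replaced by $T_{\mathrm{C}}(p)^k$. Getting the sign $(-1)^{(p^2-1)/24}$ right in each residue class is the point where a calculational slip is most likely.
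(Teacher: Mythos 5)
Your proposal is correct and follows essentially the same route as the paper: expand $T_{p}u_{\mathrm{C}}$ in $K$-Bessel Fourier series, use the geometric sum $\sum_{j=0}^{p-1}\zeta_{24p}^{(n-p^{2})j}$ to isolate $n\equiv p^{2}\Mod{24p}$, and then verify the resulting coefficient identity $(-1)^{(p^{2}-1)/24}T_{\mathrm{C}}(n/p)+T_{\mathrm{C}}(pn)=T_{\mathrm{C}}(p)T_{\mathrm{C}}(n)$ (or $=0$) case by case from multiplicativity and the explicit formula \eqref{TCform}, with the reflection $z\mapsto-\overline{z}$ handling $p\equiv-1\Mod{6}$ exactly as the paper does via $\mathcal{W}_{n}(-\overline{z})=\mathcal{W}_{-n}(z)$. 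The remaining parity checks of $(p^{2}-1)/24$ that you defer are the same routine verifications the paper also leaves to the reader.
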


\begin{proof}
Define $\mathcal{W}_{n}(z):= \sqrt{y}K_{0}(2\pi |n|y)e^{2\pi i n x}$  for $z=x+iy\in \mathcal{H}$ and $n\in \mathbb{Z}$. Then we can write the Fourier expansion of $u_{\mathrm{C}}(z)$ as
$$
u_{\mathrm{C}}(z) = \sqrt{24} \sum_{n\equiv 1(24)} T_{\mathrm{C}}(n) \mathcal{W}_{n}\left(\frac{z}{24}\right).
$$ 
We can easily check that the function $\mathcal{W}_{n}(z)$ satisfies the equations
$$
\mathcal{W}_{n}(mz) = \sqrt{m}\mathcal{W}_{mn}(z), \quad \mathcal{W}_{n}\left(z+a\right) = \mathcal{W}_{n}(z)e^{2\pi i n a}\,\,(a\in \mathbb{R}).
$$ 
So for $p\equiv 1\Mod{6}$, we have
\begin{align*}
T_{p}u_{\mathrm{C}}(z) = &\sqrt{\frac{24}{p}}\left[(-1)^{\frac{p^{2}-1}{24}}\sqrt{p}\sum_{n\equiv 1(24)} T_{\mathrm{C}}(n) \mathcal{W}_{pn}\left(\frac{z}{24}\right) \right. \\
&\left.+ \frac{1}{\sqrt{p}}\sum_{n\equiv 1(24)} T_{\mathrm{C}}(n) \mathcal{W}_{n/p}\left(\frac{z}{24}\right) \left(\sum_{j=0}^{p-1}\zeta_{24p}^{(n-p^{2})j}\right)\right]. 
\end{align*}
We can easily show that
$$
\sum_{j=0}^{p-1}\zeta_{24p}^{(n-p^{2})j} = \begin{cases} 0 & \iif n\not\equiv p^{2}\Mod{24p}  \\ p & \iif n\equiv p^{2}\Mod{24p}\end{cases}
$$
so the sum is 
\begin{align*}
T_{p}u_{\mathrm{C}}(z) &= (-1)^{\frac{p^{2}-1}{24}}\sqrt{24}\sum_{n\equiv 1(24)}T_{\mathrm{C}}(n)\mathcal{W}_{pn}\left(\frac{z}{24}\right) + \sqrt{24}\sum_{n\equiv p(24)}T_{\mathrm{C}}(pn)\mathcal{W}_{n}\left(\frac{z}{24}\right) \\
&= \sqrt{24}\sum_{n\equiv p(24)}\left((-1)^{\frac{p^{2}-1}{24}}T_{\mathrm{C}}\left(\frac{n}{p}\right)+T_{\mathrm{C}}(pn)\right)\mathcal{W}_{n}\left(\frac{z}{24}\right)
\end{align*}
if we define $T_{\mathrm{C}}(\alpha)=0$ for $\alpha\not\in\zz$. Now recall the formula of $T_{\mathrm{C}}(n)$: since it is multiplicative, $T_{\mathrm{C}}(n) = T_{\mathrm{C}}(p_{1}^{e_{1}})\cdots T_{\mathrm{C}}(p_{r}^{e_{r}})$ where each $p_{i}$ is a prime satisfying $p_{i} \equiv 1\Mod{6}$ or $p_{i} = -q_{i}$ for some prime  $q_{i}\equiv 5\Mod{6}$. (In any case, we have $p_{i}\equiv 1\Mod{6}$. 
Also, we have the formula \eqref{TCform} of $T_{\mathrm{C}}(p^{e})$ as
\begin{align*}
T_{\mathrm{C}}(p^{e}) = \begin{cases} 0 & \iif p\not\equiv 1\Mod{24}, \,\, e\equiv 1\Mod{2} \\
1 & \iif p\equiv 13 \text{ or }19\Mod{24}, \,\, e\equiv 0 \Mod{2} \\
(-1)^{e/2} & \iif p\equiv 7 \Mod{24}, \,\, e\equiv 0 \Mod{2} \\
e+1 & \iif p\equiv 1 \Mod{24}, \,\, T_{\mathrm{C}}(p) = 2 \\
(-1)^{e}(e+1) & \iif p\equiv 1\Mod{24},\,\, T_{\mathrm{C}}(p) =-2. 
\end{cases}
\end{align*}
First assume that $p\equiv 1\Mod{6}$ and $p\not\equiv 1\Mod{24}$. If $n$ is not a multiple of $p$, then $T_{\mathrm{C}}(n/p)=0$ and $T_{\mathrm{C}}(pn) = T_{\mathrm{C}}(p)T_{\mathrm{C}}(n)=0$ by the multiplicative property of $T_{\mathrm{C}}(n)$ and the previous formula. If $n$ is a multiple of $p$, then $n= p^{k} m$ for some $k\geq1$ and $(m, p)=1$. Then  $n$th coefficient of $T_{p}u(z)$ is  
$$
(-1)^{(p^{2}-1)/24}T_{\mathrm{C}}(p^{k-1}m) + T_{\mathrm{C}}(p^{k+1}m) = [(-1)^{(p^{2}-1)/24}T_{\mathrm{C}}(p^{k-1})+T_{\mathrm{C}}(p^{k+1})]T_{\mathrm{C}}(m).
$$
If $k$ is even, then the $n$th coefficient is zero because of the previous formula again. If $k$ is odd, we can easily check that $(-1)^{(p^{2}-1)/24}T_{\mathrm{C}}(p^{k-1}) + T_{\mathrm{C}}(p^{k+1})=0$ for any $p\not\equiv 1\Mod{24}$. Hence every coefficients vanishes and $T_{p}u(z)\equiv 0$. 

Now we check for the second case,  $p\equiv 1\Mod{24}$.  If $n$ is not a multiple of $p$, then the $n$th coefficient of $T_{p}u_{\mathrm{C}}(z)$ is
$
T_{\mathrm{C}}(pn) =T_{\mathrm{C}}(p)T_{\mathrm{C}}(n)
$
by the multiplicativity of $T_{\mathrm{C}}$. If $n$ is a multiple of $p$, we can write $n$ as $n = p^{k}m$ with $k\geq 1$ and $(m, p) =1$. If $T_{\mathrm{C}}(p)=2$, then 
\begin{align*}
T_{\mathrm{C}}\left(\frac{n}{p}\right)+T_{\mathrm{C}}(pn) &= T_{\mathrm{C}}(p^{k-1})T_{\mathrm{C}}(m)+T_{\mathrm{C}}(p^{k+1})T_{\mathrm{C}}(m) = \left(2k+2\right)T_{\mathrm{C}}(m) \\
&= 2T_{\mathrm{C}}(p^{k})T_{\mathrm{C}}(m) =2T_{\mathrm{C}}(n)=T_{\mathrm{C}}(p)T_{\mathrm{C}}(n)
\end{align*}
and if $T_{\mathrm{C}}(p)=-2$, we have
\begin{align*}
T_{\mathrm{C}}\left(\frac{n}{p}\right)+T_{\mathrm{C}}(pn) &= T_{\mathrm{C}}(p^{k-1})T_{\mathrm{C}}(m)+T_{\mathrm{C}}(p^{k+1})T_{\mathrm{C}}(m) = (-1)^{k-1}\left(2k+2\right)T_{\mathrm{C}}(m) \\
&= -2T_{\mathrm{C}}(p^{k})T_{\mathrm{C}}(m) =-2T_{\mathrm{C}}(n)=T_{\mathrm{C}}(p)T_{\mathrm{C}}(n).
\end{align*}
So for every case, the $n$th coefficient of $T_{p}u_{\mathrm{C}}(z)$ is $T_{\mathrm{C}}(p)T_{\mathrm{C}}(n)$ and so $T_{p}u_{\mathrm{C}}(z) = T_{\mathrm{C}}(p)u_{\mathrm{C}}(z)$. 

Similarly, if $p\equiv 5\Mod{6}$, we have 
$$
T_{p}u_{\mathrm{C}}(z) =\sqrt{24} \sum_{n\equiv -p\Mod{24}} \left((-1)^{\frac{p^{2}-1}{24}}T_{\mathrm{C}}\left(-\frac{n}{p}\right) + T_{\mathrm{C}}(-pn)\right) \mathcal{W}_{n}\left(\frac{z}{24}\right)
$$
by using $\mathcal{W}_{n}(-\overline{z}) = \mathcal{W}_{-n}(z)$. We can treat these cases by the same way as $p\equiv 1\Mod{6}$. 
\end{proof}

We can use the same argument for Li-Ngo-Rhoades' Maass wave form.

\begin{lemma}
Let $p\geq 3$ be a  prime and $\nu':\varGamma_{0}(4)\to \mathbb{S}^{1}$ be a multiplier system. If $\nu_{\mathrm{L}}:\varGamma_{0}(4)\to \mathbb{S}^{1}$ is the multiplier system associated with $u_{\mathrm{L}}(z)$ (see Theorem \ref{lithm}) and $\nu_{\mathrm{L}}$ and $\nu'$ are compatible at $\alpha_{p}  = \smat{1}{0}{0}{p}$, then $\nu' =\nu_{\mathrm{L}}^{p}$. Also, if they are compatible at $\alpha_{p}$, then we have 
\begin{align*}
c_{\nu_{\mathrm{L}}, \nu_{\mathrm{L}}^{p}}(\beta_{j}) &= \zeta_{8}^{pj}\quad(0\leq j\leq p-1), \\
c_{\nu_{\mathrm{L}}, \nu_{\mathrm{L}}^{p}}(\beta_{\infty}) &= 1.  
\end{align*}
\end{lemma}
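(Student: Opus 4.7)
The proof mirrors Lemma~3.1 with the generators $T, R', -I$ of $\varGamma_0(4)$ replacing those of $\varGamma_0(2)$. To identify $\nu'$, I would verify the commutation identities
$$
\beta_\infty T = T^p \beta_\infty, \quad \beta_0 R' = (R')^p \beta_0, \quad \beta_0(-I) = (-I)\beta_0
$$
in $\GL_2^+(\qq)$ by direct matrix multiplication, then apply $c_{\nu_L, \nu'}$ to both sides of each. Writing $\beta_\infty = \gamma_1 \alpha_p \gamma_2$ in some (unspecified) decomposition and $\beta_0 = I \cdot \alpha_p \cdot I$, compatibility together with cancellation of the nonzero unit-modulus factor $c(\beta_\infty)$ forces $\nu'(T) = \nu_L(T)^p = \zeta_8^p$, $\nu'(R') = \nu_L(R')^p = \zeta_8^p$, and $\nu'(-I) = \nu_L(-I) = 1$. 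Since these are the values of $\nu_L^p$ on the generators of $\varGamma_0(4)$, I conclude $\nu' = \nu_L^p$.

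The finite coset representatives are handled by $\beta_j = I \cdot \alpha_p \cdot T^j$, which immediately gives $c_{\nu_L, \nu_L^p}(\beta_j) = \nu_L^p(T^j) = \zeta_8^{pj}$. The substantive task is showing $c_{\nu_L, \nu_L^p}(\beta_\infty) = 1$ by producing an explicit factorization $\beta_\infty = \gamma_1 \alpha_p \gamma_2$ with $\gamma_1,\gamma_2 \in \varGamma_0(4)$ and reading off the answer from the generator words for $\gamma_1,\gamma_2$. Because the congruence $4 \mid c$ defining $\varGamma_0(4)$ is stricter than $2 \mid c$ for $\varGamma_0(2)$, the simplest decomposition splits according to $p \pmod 4$.

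For $p \equiv 1 \pmod 4$ I would take $\gamma_1 = T^{-(p-1)/4}(R')^{-1}$ and $\gamma_2 = T^{(p-1)/4}R'$; a direct product confirms $\gamma_1 \alpha_p \gamma_2 = \beta_\infty$ and gives $c(\beta_\infty) = \zeta_8^{(p-1)(p+3)/4}$. For $p \equiv 3 \pmod 4$ the Cohen-style symmetric ansatz $M = T^{(p+1)/4}(R')^{-1}$ lies in $\varGamma_0(4)$ precisely because $4 \mid p+1$, and a direct calculation gives $M\alpha_p M = -\beta_\infty$; since $\nu'(-I) = 1$ from the first step, this yields $c(\beta_\infty) = \nu_L(M)^{p+1} = \zeta_8^{(p-3)(p+1)/4}$. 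Writing $p = 4k+1$ or $p = 4k+3$ as appropriate, both exponents simplify to $4k(k+1)$, and since $k(k+1)$ is always even this is a multiple of $8$; hence $c_{\nu_L, \nu_L^p}(\beta_\infty) = 1$ in either case.

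The main obstacle is producing the two decompositions of $\beta_\infty$: unlike the Cohen case, no single symmetric ansatz $\beta_\infty = -M\alpha_p M$ yields an integer-entry $M \in \varGamma_0(4)$ for every prime $p \geq 3$, and an asymmetric factorization must be found for $p \equiv 1 \pmod 4$. Once the explicit matrices are in hand, the rest is arithmetic bookkeeping of powers of $\zeta_8$.
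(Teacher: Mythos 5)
Your proposal is correct and follows essentially the same route as the paper: the paper identifies $\nu'$ on the generators via the same commutation relations and establishes $c_{\nu_{\mathrm{L}},\nu_{\mathrm{L}}^{p}}(\beta_{\infty})=1$ from the identities $\beta_{\infty}=T^{\frac{p-1}{4}}R'\,\alpha_{p}\,T^{-\frac{p-1}{4}}(R')^{-1}$ for $p\equiv 1\Mod{4}$ and $\beta_{\infty}=-T^{\frac{p+1}{4}}(R')^{-1}\alpha_{p}T^{\frac{p+1}{4}}(R')^{-1}$ for $p\equiv -1\Mod{4}$, which are your two factorizations up to reversing which factor sits on which side of $\alpha_{p}$ in the first case. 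Your exponent bookkeeping ($4k(k+1)\equiv 0\Mod{8}$) and your observation that the symmetric ansatz forces the extra $-I$ are both accurate.
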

\begin{proof}
Proof for the first statement is almost same as the Cohen's case.  If they $\nu_{\mathrm{L}}$ and $\nu_{\mathrm{L}}'$ are compatible at $\alpha_{p}$, then 
\begin{align*}
c_{\nu_{\mathrm{L}}, \nu_{\mathrm{L}}^{p}}(\beta_{j}) &= c_{\nu_{\mathrm{L}}, \nu_{\mathrm{L}}^{p}}\left(\alpha_{p}\pmat{1}{1}{0}{1}^{p}\right) = \zeta_{8}^{pj},
\end{align*}
and $c_{\nu_{\mathrm{L}}, \nu_{\mathrm{L}}^{p}}(\beta_{\infty})=1$ follows from the matrix identities:
\begin{align*}
p\equiv 1\Mod{4} \Rightarrow & \,\beta_{\infty}  = \pmat{1}{1}{0}{1}^{\frac{p-1}{4}}\pmat{1}{0}{4}{1}\alpha_{p}\pmat{1}{1}{0}{1}^{-\frac{p-1}{4}}\pmat{1}{0}{4}{1}^{-1} \\
p\equiv -1 \Mod{4} \Rightarrow &\,\beta_{\infty} = -\pmat{1}{1}{0}{1}^{\frac{p+1}{4}}\pmat{1}{0}{4}{1}^{-1}\alpha_{p}\pmat{1}{1}{0}{1}^{\frac{p+1}{4}}\pmat{1}{0}{4}{1}^{-1}.
\end{align*}
\end{proof}

\begin{definition}
For a prime $p\geq 3$, define the Hecke operator $T_{p}$ as
$$
T_{p}u_{\mathrm{L}}(z):= T_{\alpha_{p}, \nu_{\mathrm{L}}, \nu_{\mathrm{L}}^{p}}u_{\mathrm{L}}(z)  = \frac{1}{\sqrt{p}}\left(u_{\mathrm{L}}(pz) + \sum_{j=0}^{p-1}\zeta_{8}^{-pj}u_{\mathrm{L}}\left(\frac{z+j}{p}\right)\right)
$$
for $p\equiv 1\Mod{4}$ and 
$$
T_{p}u_{\mathrm{L}}(z) := T_{\alpha_{p}, \nu_{\mathrm{L}}, \nu_{\mathrm{L}}^{p}}u_{\mathrm{L}}(-\overline{z})= \frac{1}{\sqrt{p}}\left(u_{\mathrm{L}}(-p\overline{z}) + \sum_{j=0}^{p-1}\zeta_{8}^{-pj}u_{\mathrm{L}}\left(\frac{-\overline{z}+j}{p}\right)\right)
$$
for $p\equiv -1\Mod{4}$. 
\end{definition}
Note that $T_{\alpha_{p}, \nu_{\mathrm{L}}, \nu_{\mathrm{L}}^{p}}u(-\overline{z})\in \mathcal{M}_{1/2}(\varGamma_{0}(4), \nu_{\mathrm{L}}^{-p})$ because $T_{\alpha_{p}, \nu_{\mathrm{L}}, \nu_{\mathrm{L}}^{p}}u(z)\in \mathcal{M}_{1/2}(\varGamma_{0}(4), \nu_{\mathrm{L}}^{p})$. 
\begin{theorem}
For any prime $p\geq 5$, we have
$$
T_{p}u_{\mathrm{L}}(z) = \begin{cases} 0 &\iif  p\not\equiv  \pm 1\Mod{8} \\ T_{\mathrm{L}}(\pm p)u_{\mathrm{L}}(z) &\iif  p\equiv \pm 1\Mod{8}\end{cases}
$$
\end{theorem}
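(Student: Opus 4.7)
The plan is to run the exact template of the proof of Theorem 3.1, with three substantive modifications: the conductor $24$ is replaced by $8$; the sign $(-1)^{(p^{2}-1)/24}$ disappears because Lemma 3.4 gives $c_{\nu_{\mathrm{L}},\nu_{\mathrm{L}}^{p}}(\beta_{\infty})=1$; and the multiplicative sequence $T_{\mathrm{C}}$ is replaced by $T_{\mathrm{L}}$, analyzed via the explicit formula \eqref{TLform}.

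Setting $\mathcal{W}_{n}(z):=\sqrt{y}K_{0}(2\pi|n|y)e^{2\pi inx}$ and using the Fourier expansion
\[
u_{\mathrm{L}}(z)=\sqrt{8}\sum_{n\equiv 1\Mod{8}}T_{\mathrm{L}}(n)\,\mathcal{W}_{n}(z/8),
\]
together with the three functional identities $\mathcal{W}_{n}(mz)=\sqrt{m}\,\mathcal{W}_{mn}(z)$ for $m>0$, $\mathcal{W}_{n}(z+a)=e^{2\pi ina}\mathcal{W}_{n}(z)$, and $\mathcal{W}_{n}(-\overline z)=\mathcal{W}_{-n}(z)$, I would substitute into the $p\equiv 1\Mod{4}$ definition of $T_{p}$. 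The Gauss-type identity
\[
\sum_{j=0}^{p-1} e^{2\pi ij(n-p^{2})/(8p)} = p\cdot \mathbf{1}_{n\equiv p^{2}\Mod{8p}}
\]
(which holds whenever $n\equiv 1\Mod{8}$, automatic from the support of $T_{\mathrm{L}}$) collapses the inner sum, and relabeling the Fourier index of the $u_{\mathrm{L}}(pz)$ term yields
\[
T_{p}u_{\mathrm{L}}(z) = \sqrt{8}\sum_{n\equiv p\Mod{8}} \bigl(T_{\mathrm{L}}(n/p)+T_{\mathrm{L}}(pn)\bigr)\,\mathcal{W}_{n}(z/8),
\]
under the convention $T_{\mathrm{L}}(\alpha)=0$ for $\alpha\notin\mathbb{Z}$. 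The case $p\equiv -1\Mod{4}$ is parallel: applying $\mathcal{W}_{n}(-\overline z)=\mathcal{W}_{-n}(z)$ first, one arrives at the analogous expression indexed over $n\equiv -p\Mod{8}$ with bracket $T_{\mathrm{L}}(-n/p)+T_{\mathrm{L}}(-pn)$.

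The final step is to collapse the bracketed coefficient via multiplicativity of $T_{\mathrm{L}}$. Writing $n=p^{k}m$ with $(m,p)=1$, the bracket equals $[T_{\mathrm{L}}(p^{k-1})+T_{\mathrm{L}}(p^{k+1})]T_{\mathrm{L}}(m)$, and I split on $p\Mod{8}$ using \eqref{TLform}. If $p\equiv 5\Mod{8}$ the bracketed factor vanishes identically (zero by parity when $k$ is even, and telescoping to zero via $T_{\mathrm{L}}(p^{2j})=(-1)^{j}$ when $k$ is odd), giving $T_{p}u_{\mathrm{L}}\equiv 0$. If $p\equiv 1\Mod{8}$, the Hecke-type recurrence $T_{\mathrm{L}}(p^{k-1})+T_{\mathrm{L}}(p^{k+1})=T_{\mathrm{L}}(p)T_{\mathrm{L}}(p^{k})$ holds for both $T_{\mathrm{L}}(p)=\pm 2$ (a direct check from the formula), and the coefficient collapses to $T_{\mathrm{L}}(p)T_{\mathrm{L}}(n)$. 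The residues $p\equiv 3,\,7\Mod{8}$ are treated identically with $-p$ replacing $p$. The main obstacle I anticipate is purely bookkeeping: aligning the congruence $n\equiv \pm p\Mod{8}$ forced by the Gauss sum with the support condition $n\equiv 1\Mod{8}$ of $T_{\mathrm{L}}$, and keeping straight the extra minus signs coming from the $\mathcal{W}_{n}(-\overline z)=\mathcal{W}_{-n}(z)$ symmetry in the $p\equiv -1\Mod{4}$ branch; no essentially new idea beyond Theorem 3.1 seems required.
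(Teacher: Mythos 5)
Your proposal is correct and follows exactly the route the paper takes: the paper's own proof is a one-line reduction to the Cohen case ("use multiplicativity and the explicit formula \eqref{TLform}"), and your write-up simply carries out that reduction explicitly, with the conductor $24$ replaced by $8$, the sign $(-1)^{(p^{2}-1)/24}$ removed via $c_{\nu_{\mathrm{L}},\nu_{\mathrm{L}}^{p}}(\beta_{\infty})=1$, and the case analysis on $p\bmod 8$ from \eqref{TLform}. All the individual steps (the Gauss-sum collapse using $n\equiv p^{2}\equiv 1\Mod{8}$, the Hecke recurrence for $p\equiv 1\Mod{8}$, the vanishing for $p\equiv 5\Mod{8}$, and the $-\overline{z}$ twist for $p\equiv 3\Mod{4}$) check out.
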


\begin{proof}
The proof is almost same as the case of Cohen's Maass wave form. We use multiplicativity and the explicit formula \eqref{TLform} of $T_{\mathrm{L}}(p^{e})$. 
\end{proof}

\subsection{Hecke equivariance}

Now we have two kinds of Hecke operators: on the space of Maass wave forms and quantum modular forms. We show that these operators are compatible, and as a result, we give  new formulas of $T_{\mathrm{C}}(p)$ and $T_{\mathrm{L}}(p)$ in terms of $p$th roots of unity.

In \cite{br07}, by using the Lewis-Zagier theory, Bruggeman showed that one can associate quantum modular forms with Maass wave forms on $\mathrm{SL}_{2}(\mathbb{Z})$ with trivial multiplier systems. 
In \cite{br17}, Bringmann, Lovejoy and Rolen extended Bruggeman's result to the more general situation, in case of Maass wave forms on a congruence subgroup with spectral parameter $s=1/2$ and trivial multiplier systems. By the same argument, we can prove the following result about Maass wave forms with nontrivial multiplier systems. Since the proof is almost same, we  only present the result. 
\begin{theorem}
Let $N$ be a natural number, $I$ be the set of cusps of $\varGamma_{0}(N)$, and $\nu:\varGamma_{0}(N)\to \mathbb{S}^{1}$ be a multiplier system. 
For $u(z)\in \mathcal{M}_{1/2}(\varGamma_{0}(N), \nu)$, let $f:\mathbb{C}\backslash \mathbb{R}\to\mathbb{C}$ be a period function of $u$, i.e. the holomorphic function defined as 
$$
f(z) = \begin{cases}
\int_{z}^{i\infty} [u(\tau), R_{z}(\tau)^{1/2}] &\iif  z\in \mathcal{H} \\
-\int_{\overline{z}}^{i\infty} [R_{z}(\tau)^{1/2}, u(\tau)] &\iif  z\in \mathcal{H}^{-}
\end{cases}
$$
where $\mathcal{H}^{-} = \{z = x+iy\in \mathbb{C}\,:\, y <0\}$,   $[u, v]=[u, v](\tau)$ is a Green's 1-form defined as 
$$
[u, v](\tau) = v \frac{\partial u}{\partial \tau}\hbox{d}\tau + u \frac{\partial v}{\partial \overline{\tau}}\hbox{d}\overline{\tau},
$$
and $R_{z}(\tau)$ is a function defined as
$$
R_{z}(\tau) = \frac{y}{(x-z)^{2}+y^{2}}= \frac{i}{2}\left(\frac{1}{\tau - z} - \frac{1}{\overline{\tau}-z}\right), \quad z\in\mathbb{C}, \tau = x+iy\in \mathcal{H}.
$$ 
Then $f$ can be continuously  extended to a quantum modular form on $\varGamma_{0}(N)$ of weight 1 with multiplier system $\nu$, defined on the subset $\cup_{\iota\in J} S_{\iota}$ of $\mathbb{Q}$, where $J\subseteq I$ is the set of cusps that $u(z)$ vanishes,
and 
$$
S_{\iota} = \{x\in \mathbb{Q}:\, \gamma x = \iota\text{ for some }\gamma\in \varGamma_{0}(N)\}. 
$$
 More precisely, the function $f:\cup_{\iota\in J}S_{\iota}\to \mathbb{C}$ satisfies the functional equations
\begin{align*}
&f(x+1) = \nu(T)f(x), \quad T = \pmat{1}{1}{0}{1}, \\
&f(x) - \nu(\gamma)^{-1} \frac{1}{|cx+d|}f\left(\frac{ax+b}{cx+d}\right) = \int_{-d/c}^{i\infty} [u(\tau), R_{x}(\tau)^{1/2}]
\end{align*}
for every $\gamma = \left(\begin{smallmatrix} a&b\\c&d\end{smallmatrix}\right)\in \varGamma_{0}(N)$ with $c\neq 0$. 
\end{theorem}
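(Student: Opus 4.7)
The plan is to adapt the argument of Bruggeman \cite{br07} and Bringmann--Lovejoy--Rolen \cite{br17} to the setting of a nontrivial multiplier system $\nu$, tracking where the multiplier enters each step. The two ingredients are: (i) $f$ extends continuously from $\mathcal{H}\cup\mathcal{H}^{-}$ to the boundary at every rational $x$ whose $\Gamma_{0}(N)$-orbit passes through a cusp at which $u$ vanishes; (ii) the resulting boundary values satisfy the stated functional equations.

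First I would verify that the Green's 1-form $[u,v]$ is closed whenever $u$ and $v$ are Laplace eigenfunctions with eigenvalue $\tfrac{1}{4}$, which is the standard Lewis--Zagier computation. Since $R_{z}(\tau)^{1/2}$ satisfies $\Delta R_{z}^{1/2}=\tfrac{1}{4}R_{z}^{1/2}$ in $\tau$, the integrand is closed in $\tau$ and the integral is path-independent within $\mathcal{H}$ (respectively $\mathcal{H}^{-}$) as long as the path avoids the singular pole at $\tau=z$. This justifies deforming the contour from $z$ to $i\infty$ freely, which is what enables everything downstream.

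Next I would derive the two functional equations by contour manipulation and the transformation law $u(\gamma\tau)=\nu(\gamma)u(\tau)$. For the translation, substituting $\tau\mapsto\tau-1$ in $f(x+1)$ and using $u(\tau+1)=\nu(T)u(\tau)$ together with $R_{x+1}(\tau+1)=R_{x}(\tau)$ gives $f(x+1)=\nu(T)f(x)$. For the three-term equation attached to $\gamma=\smat{a}{b}{c}{d}\in\Gamma_{0}(N)$ with $c\neq 0$, I would split the contour defining $f(x)$ as a sum
\[
\int_{z}^{i\infty}=\int_{z}^{-d/c}+\int_{-d/c}^{i\infty},
\]
and on the first piece change variables $\tau\mapsto \gamma^{-1}\tau$. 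Using $u(\gamma^{-1}\tau)=\nu(\gamma)^{-1}u(\tau)$ and the transformation identity $R_{z}(\gamma^{-1}\tau)^{1/2}=|cz+d|^{-1}\,R_{\gamma z}(\tau)^{1/2}\cdot(\text{Jacobian terms that cancel})$, the first piece matches $\nu(\gamma)^{-1}|cx+d|^{-1}f(\gamma x)$ once the path from $\gamma^{-1}(\text{endpoint})$ to $i\infty$ is rearranged. The second piece is the claimed cocycle $\int_{-d/c}^{i\infty}[u(\tau),R_{x}(\tau)^{1/2}]$, which is manifestly smooth in $x\in\mathbb{R}$ away from $x=-d/c$.

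The main obstacle, and the place where the hypothesis $\iota\in J$ is used, is the continuous extension of $f$ from $\mathcal{H}$ to $x\in S_{\iota}$. Pick $\gamma\in\Gamma_{0}(N)$ with $\gamma x=\iota$ and write the integral defining $f(z)$ for $z$ near $x$ by pushing the contour through $\gamma^{-1}(i\infty)=x$; the growth of the integrand near the cusp is controlled by the Fourier expansion of $u$ at $\iota$, and the vanishing of $u$ at $\iota$ (which is exactly the content of $\iota\in J$) forces the zero-Fourier-coefficient to vanish and the remaining terms to decay exponentially, making the limit $\lim_{z\to x}f(z)$ exist. One has to do this both from $\mathcal{H}$ and from $\mathcal{H}^{-}$ and check that the two limits agree, for which the minus sign in the definition of $f$ on $\mathcal{H}^{-}$ is exactly what is needed (this is the standard compatibility observation in Lewis--Zagier). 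Finally, smoothness of the cocycle $\int_{-d/c}^{i\infty}[u(\tau),R_{x}(\tau)^{1/2}]$ on $\mathbb{R}\setminus\{-d/c\}$ follows by differentiation under the integral sign, with exponential decay of $u$ at $i\infty$ ensuring convergence; the single excluded point $x=-d/c$ yields the finite exceptional set in the definition of a quantum modular form.
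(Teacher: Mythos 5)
Your proposal is correct and follows exactly the route the paper intends: the paper gives no proof of this theorem, stating only that it follows ``by the same argument'' as Bruggeman \cite{br07} and Bringmann--Lovejoy--Rolen \cite{br17}, and your sketch is precisely that argument (closedness of the Green's form, contour splitting at $-d/c=\gamma^{-1}(i\infty)$ combined with the transformation law of $u$ and of $R_{z}(\tau)$, and vanishing of $u$ at the cusps in $J$ to get the boundary extension), with the multiplier system $\nu$ correctly tracked at each step. The only point worth making explicit in a full write-up is the convergence of the cocycle integral $\int_{-d/c}^{i\infty}[u(\tau),R_{x}(\tau)^{1/2}]$ at its lower endpoint, which again uses the decay of $u$ at the relevant cusp, but this is the same issue you already address for the boundary extension.
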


Let us denote the above map $\mathcal{M}_{1/2}(\varGamma, \nu)\to \mathcal{Q}_{1}(\varGamma, \nu), \, u\mapsto f$ as $\varPsi_{\nu}$.  We show that $\varPsi_{\nu}$ is a Hecke-equivariant map. 

\begin{theorem} 
\label{comp}
Assume that we can choose coset representatives $\{\varGamma\beta_{j}\}$ of $\varGamma\backslash \varGamma\alpha\varGamma$ that satisfy  $\beta_{j}(i\infty) = i\infty$, i.e. $\beta_{j}$ has a form of $ \left(\begin{smallmatrix}a_{j}&b_{j} \\ 0 & d_{j}\end{smallmatrix}\right)$. Then
the following diagram commutes:
\[
\begin{tikzcd}
\mathcal{M}_{1/2}(\varGamma, \nu) \rar{T_{\alpha, \nu,\nu'}} \dar{\varPsi_{\nu}} & \mathcal{M}_{1/2}(\varGamma, \nu') \dar{\varPsi_{\nu'}} \\
\mathcal{Q}_{1}(\varGamma, \nu) \rar{T^{\infty}_{\alpha, \nu, \nu'}} &\mathcal{Q}_{1}(\varGamma, \nu')
\end{tikzcd}
\]
\end{theorem}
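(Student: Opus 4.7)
The plan is to check commutativity of the diagram pointwise at $z\in\mathcal{H}$, where both sides come from genuine functions on $\mathbb{C}\setminus\mathbb{R}$, and then to invoke the continuous extension to $\mathbb{Q}$ afforded by the preceding theorem. Write $f=\varPsi_\nu(u)$ and $\beta_j=\begin{pmatrix}a_j&b_j\\0&d_j\end{pmatrix}$ with $a_jd_j=\det\alpha>0$; after replacing $\beta_j$ by $-\beta_j$ where necessary (using $-I\in\varGamma_0(N)$), we may assume $a_j,d_j>0$.

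By linearity of the Green's form in its first argument,
\begin{align*}
\varPsi_{\nu'}(T_{\alpha,\nu,\nu'}u)(z) &= \det(\alpha)^{-1/2}\sum_j c_{\nu,\nu'}(\beta_j)^{-1}\int_z^{i\infty}[u(\beta_j\tau),R_z(\tau)^{1/2}].
\end{align*}
We would then evaluate each summand by the biholomorphic substitution $\sigma=\beta_j\tau$. The upper limit $\tau=i\infty$ is preserved by the standing hypothesis $\beta_j(i\infty)=i\infty$ and the lower limit $\tau=z$ becomes $\sigma=\beta_jz\in\mathcal{H}$. A direct computation from $R_\zeta(\tau)=\tfrac{i}{2}\bigl(\tfrac{1}{\tau-\zeta}-\tfrac{1}{\bar\tau-\zeta}\bigr)$ using $c_j=0$ yields
$$R_z(\beta_j^{-1}\sigma)=(a_j/d_j)\,R_{\beta_jz}(\sigma),$$
and since $d\sigma/d\tau=a_j/d_j$ is a positive real constant, the Green's 1-form transforms as
$$[u(\beta_j\tau),R_z(\tau)^{1/2}]=(a_j/d_j)^{1/2}\,[u(\sigma),R_{\beta_jz}(\sigma)^{1/2}],$$
so that
$$\int_z^{i\infty}[u(\beta_j\tau),R_z(\tau)^{1/2}]=(a_j/d_j)^{1/2}f(\beta_jz).$$

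Plugging this back in and simplifying $\det(\alpha)^{-1/2}(a_j/d_j)^{1/2}=1/d_j=|c_jz+d_j|^{-1}$ (which uses $a_jd_j=\det\alpha$, $c_j=0$, and $d_j>0$), we obtain
$$\varPsi_{\nu'}(T_{\alpha,\nu,\nu'}u)(z)=\sum_j c_{\nu,\nu'}(\beta_j)^{-1}(f|_1\beta_j)(z)=T^\infty_{\alpha,\nu,\nu'}f(z).$$
A parallel computation for $z\in\mathcal{H}^-$ (using the second branch of the definition of $f$ with the reversed Green's form and integration from $\bar z$) produces the same identity, and the result passes to the dense subset of $\mathbb{Q}$ on which both sides are defined via the continuous extension from $\mathbb{C}\setminus\mathbb{R}$.

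The main obstacle is the change-of-variables book-keeping for the Green's 1-form: tracking the square-root factors arising from $R_z^{1/2}$ against the Jacobian $d\sigma/d\tau=a_j/d_j$. The hypothesis that $\beta_j$ be upper triangular is essential precisely so that this Jacobian is a positive real constant — guaranteeing both that there is no branch issue with the square root and that the upper limit $i\infty$ is preserved. A secondary check is that the normalization $\det(\alpha)^{-1/2}$ on the Maass side matches the $|c_jz+d_j|^{-1}$ appearing in the weight-one slash operator on the quantum side; these agree precisely because $a_jd_j=\det\alpha$.
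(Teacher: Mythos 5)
Your argument is correct and follows essentially the same route as the paper's own proof: evaluate both compositions at $z\in\mathcal{H}$, pull the sum out of the Green's form, change variables by $\sigma=\beta_j\tau$ (using $\beta_j(i\infty)=i\infty$), apply the scaling identity $R_z(\beta_j^{-1}\sigma)=(a_j/d_j)R_{\beta_jz}(\sigma)$, and match the constants via $\det(\alpha)^{-1/2}(a_j/d_j)^{1/2}=1/d_j$, with the $\mathcal{H}^-$ case handled symmetrically. The only cosmetic difference is your explicit sign normalization $a_j,d_j>0$, which the paper leaves implicit.
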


For the proof, we will use the following simple identity about $R_{z}(\tau)$, which can be verified as a direct computation. 
\begin{lemma}
For any $\tau\in \mathcal{H}, z\in\mathbb{C}$ and $g\in \mathrm{GL}_{2}^{+}(\mathbb{R})$, we have 
$$
R_{gz}(g\tau) = \frac{(cz+d)^{2}}{\det(g)} R_{z}(\tau). 
$$
\end{lemma}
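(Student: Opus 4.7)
The plan is to verify the identity by direct computation, using the Möbius-transformation formula for the denominator and exploiting the fact that $g\in\mathrm{GL}_2^+(\mathbb{R})$ has real entries, so complex conjugation commutes with the action on $\tau$.

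First I would plug into the definition $R_{gz}(g\tau)=\tfrac{i}{2}\bigl(\tfrac{1}{g\tau-gz}-\tfrac{1}{\overline{g\tau}-gz}\bigr)$ and simplify each denominator. For $g=\left(\begin{smallmatrix}a&b\\c&d\end{smallmatrix}\right)$ the standard cocycle identity gives
\[
g\tau-gz=\frac{(a\tau+b)(cz+d)-(az+b)(c\tau+d)}{(c\tau+d)(cz+d)}=\frac{\det(g)\,(\tau-z)}{(c\tau+d)(cz+d)}.
\]
Since $g$ is real, $\overline{g\tau}=g\overline{\tau}$, so the analogous computation yields
\[
\overline{g\tau}-gz=\frac{\det(g)\,(\overline{\tau}-z)}{(c\overline{\tau}+d)(cz+d)}.
\]

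Next I would take reciprocals and subtract, pulling the common factor $(cz+d)/\det(g)$ outside:
\[
R_{gz}(g\tau)=\frac{cz+d}{\det(g)}\cdot\frac{i}{2}\left(\frac{c\tau+d}{\tau-z}-\frac{c\overline{\tau}+d}{\overline{\tau}-z}\right).
\]
The key algebraic trick is the decomposition $c\tau+d=c(\tau-z)+(cz+d)$, and similarly with $\overline{\tau}$. Substituting, the constant $c$ terms cancel and we are left with
\[
\frac{c\tau+d}{\tau-z}-\frac{c\overline{\tau}+d}{\overline{\tau}-z}=(cz+d)\left(\frac{1}{\tau-z}-\frac{1}{\overline{\tau}-z}\right),
\]
which produces a second factor of $(cz+d)$ and reveals the original $R_z(\tau)$. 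Combining gives $(cz+d)^2/\det(g)\cdot R_z(\tau)$, as claimed.

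There is essentially no obstacle: the only point requiring a touch of care is the identity $\overline{g\tau}=g\overline{\tau}$, which relies on $g$ having real entries (so the formula as stated genuinely needs $g\in\mathrm{GL}_2^+(\mathbb{R})$ and not $\mathrm{GL}_2^+(\mathbb{C})$). Everything else is an algebraic rearrangement, and one can equivalently start from the real form $R_z(\tau)=y/((x-z)^2+y^2)$ if one prefers, using the same cocycle identity on $g\tau-gz$ together with $\mathrm{Im}(g\tau)=\det(g)\,y/|c\tau+d|^2$.
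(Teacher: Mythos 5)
Your proof is correct and matches the paper's intent exactly: the paper gives no written argument for this lemma, stating only that it ``can be verified as a direct computation,'' and your computation --- the cocycle identity $g\tau-gz=\det(g)(\tau-z)/\bigl((c\tau+d)(cz+d)\bigr)$, the observation $\overline{g\tau}=g\overline{\tau}$ from the real entries of $g$, and the cancellation via $c\tau+d=c(\tau-z)+(cz+d)$ --- is precisely that direct computation, carried out cleanly.
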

\begin{proof}[Proof of Theorem \ref{comp}]
Let $u\in \mathcal{M}_{1/2}(\varGamma, \nu)$ and write $\beta_{j} = \smat{a_{j}}{b_{j}}{0}{d_{j}}$. 
For $z\in \mathcal{H}$, 
\begin{align*}
T_{\alpha, \nu, \nu'}^{\infty} (\varPsi_{\nu} u) (z) &= \sum_{j} c_{\nu, \nu'}(\beta_{j})^{-1} (\varPsi_{\nu}u)|\beta_{j} \\
&= \sum_{j} c_{\nu, \nu'}(\beta_{j})^{-1} \left(\int_{z}^{i\infty} [u(\tau), R_{z}(\tau)^{1/2}]\right)\Big|\beta_{j} \\
&= \sum_{j} c_{\nu, \nu'}(\beta_{j})^{-1} \frac{1}{d_{j}}\int_{\beta_{j}z}^{i\infty} [u(\tau), R_{\beta_{j}z}(\tau)^{1/2}]
\end{align*}
and 
\begin{align*}
\varPsi_{\nu'}T_{\alpha, \nu, \nu'}u(z) &= \int_{z}^{i\infty} [T_{\alpha, \nu, \nu'}u(\tau), R_{z}(\tau)^{1/2}] \\
&= \det(\alpha)^{-1/2} \int_{z}^{i\infty} \left[\sum_{j} c_{\nu, \nu'}(\beta_{j})^{-1} u(\beta_{j}\tau), R_{z}(\tau)^{1/2}\right] \\
&= \det(\alpha)^{-1/2} \sum_{j} c_{\nu, \nu'}(\beta_{j})^{-1}\int_{z}^{i\infty} [u(\beta_{j}\tau), R_{z}(\tau)^{1/2}] \\
&= \det(\alpha)^{-1/2} \sum_{j} c_{\nu, \nu'}(\beta_{j})^{-1} \int_{\beta_{j}z}^{i\infty} [u(\tau), R_{z}(\beta_{j}^{-1}\tau)^{1/2}] \\
&= \det(\alpha)^{-1/2} \sum_{j} c_{\nu, \nu'}(\beta_{j})^{-1} \int_{\beta_{j}z}^{i\infty} [u(\tau), R_{\beta_{j}z}(\tau)^{1/2}]\frac{\det(\beta_{j})^{1/2}}{d_{j}} \\
&= \sum_{j} c_{\nu, \nu'}(\beta_{j})^{-1} \frac{1}{d_{j}}\int_{\beta_{j}z}^{i\infty} [u(\tau), R_{\beta_{j}z}(\tau)^{1/2}]
\end{align*}
since $\det(\beta_{j}) = \det(\alpha)$. Hence we have $T_{\alpha, \nu, \nu'}^{\infty}\varPsi_{\nu} = \varPsi_{\nu'}T_{\alpha, \nu, \nu'}$. We can proceed similarly for $z\in \mathcal{H}^{-}$. 
\end{proof}

Now we will concentrate on Cohen's and Li-Ngo-Rhoades' examples (\eqref{fCdefn} and \eqref{fLdefn}). 
In both cases, the assumption of Theorem \ref{comp} holds, so the diagram commutes. 
For the Cohen's case, since $u_{\mathrm{C}}(z)$ is a Hecke eigenform, $f_{\mathrm{C}}(x)$ is also a Hecke eigenform as a corollary. 

\begin{corollary}
Define the Hecke operator $T_{p}^{\infty}  : \mathcal{Q}_{1}(\varGamma_{0}(2), \nu_{\mathrm{C}}) \to \mathcal{Q}_{1}(\varGamma_{0}(2), \nu_{\mathrm{C}}^{\pm p})$ by 
$$
T_{p}^{\infty}f(z) := T_{\alpha_{p}, \nu_{\mathrm{C}}, \nu_{\mathrm{C}}^{p}}^{\infty}f(z) = (-1)^{\frac{p^{2}-1}{24}} f(pz) + \frac{1}{p} \sum_{j=0}^{p-1} \zeta_{24}^{-pj} f\left(\frac{z+j}{p}\right)
$$
for $p\equiv 1\Mod{6}$ and 
$$
T_{p}^{\infty}f(z) := T_{\alpha_{p}, \nu_{\mathrm{C}}, \nu_{\mathrm{C}}^{p}}^{\infty}f(-z)= (-1)^{\frac{p^{2}-1}{24}} f(-pz) + \frac{1}{p} \sum_{j=0}^{p-1} \zeta_{24}^{-pj} f\left(\frac{-z+j}{p}\right)
$$
for $p\equiv -1\Mod{6}$.
Then $f_{\mathrm{C}}:\mathbb{Q}\to \mathbb{C}$ in \eqref{fCdefn} is a Hecke eigenform, i.e. 
$$
T_{p}^{\infty} f_{\mathrm{C}}(x) = \begin{cases} 0 & \iif   p\not\equiv \pm1\Mod{24} \\ \pm T_{\mathrm{C}}(\pm p)f_{\mathrm{C}}(x) & \iif  p\equiv\pm 1\Mod{24} \end{cases}
$$
for any prime $p\geq 5$ and $x\in \mathbb{Q}$.
\end{corollary}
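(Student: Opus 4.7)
The plan is to derive the Corollary directly from the Maass-side eigenform statement (proved in the preceding theorem) by invoking the Hecke-equivariance square of Theorem~\ref{comp}. As a preliminary step, I would identify $\varPsi_{\nu_{\mathrm{C}}}(u_{\mathrm{C}})$ with $f_{\mathrm{C}}$: since $u_{\mathrm{C}}$ is cuspidal, the period-function construction produces a quantum modular form defined on all of $\qq$, and this is by construction Zagier's $f_{\mathrm{C}}$, up to a normalization one fixes by comparing $q$-expansions at $i\infty$.

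For $p \equiv 1 \Mod{6}$ the argument is then immediate. Since $\nu_{\mathrm{C}}$ has order $24$, one has $\nu_{\mathrm{C}}^{p} = \nu_{\mathrm{C}}$ exactly when $p \equiv 1 \Mod{24}$, and Theorem~\ref{comp} gives
$$
T_{p}^{\infty} f_{\mathrm{C}} \;=\; T^{\infty}_{\alpha_{p},\nu_{\mathrm{C}},\nu_{\mathrm{C}}^{p}} \varPsi_{\nu_{\mathrm{C}}}(u_{\mathrm{C}}) \;=\; \varPsi_{\nu_{\mathrm{C}}^{p}}\bigl(T_{\alpha_{p},\nu_{\mathrm{C}},\nu_{\mathrm{C}}^{p}} u_{\mathrm{C}}\bigr),
$$
which, by the preceding theorem, equals $T_{\mathrm{C}}(p)\, f_{\mathrm{C}}$ when $p \equiv 1 \Mod{24}$ and $0$ when $p \equiv 7, 13, 19 \Mod{24}$.

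For $p \equiv -1 \Mod{6}$ both Hecke operators involve an extra reflection ($z \mapsto -\overline{z}$ on the Maass side, $x \mapsto -x$ on the quantum side), so the missing ingredient is a reflection-compatibility lemma for $\varPsi$. Setting $\widetilde{u}(z) := u(-\overline{z})$, which lies in $\mm_{1/2}(\varGamma_{0}(2), \nu_{\mathrm{C}}^{-1})$ when $u \in \mm_{1/2}(\varGamma_{0}(2), \nu_{\mathrm{C}})$, the lemma takes the shape
$$
\varPsi_{\nu^{-1}}(\widetilde{u})(x) \;=\; \epsilon\, \varPsi_{\nu}(u)(-x)
$$
for a fixed sign $\epsilon \in \{\pm 1\}$; this should follow from a direct manipulation of the Green-form integrals defining $\varPsi$ on $\hh$ and on $\hh^{-}$, exploiting the involution $\tau \leftrightarrow -\overline{\tau}$ which interchanges the two branches and swaps the holomorphic with the antiholomorphic parts of $[u, R_{z}^{1/2}]$. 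Granted this, the same Hecke-equivariance argument, combined with $T_{p} u_{\mathrm{C}} = T_{\mathrm{C}}(-p)\, u_{\mathrm{C}}$ for $p \equiv -1 \Mod{24}$, yields $T_{p}^{\infty} f_{\mathrm{C}} = \epsilon\, T_{\mathrm{C}}(-p)\, f_{\mathrm{C}}$.

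The main obstacle is the sign bookkeeping in this last step: one must verify that the three sources of signs (the reflection in the definition of $T_{p}^{\infty}$, the reflection inside $\varPsi$, and the multiplier-conjugation compatibility $\nu_{\mathrm{C}}^{p} = \nu_{\mathrm{C}}^{-1}$ for $p \equiv -1 \Mod{24}$) combine to give $-T_{\mathrm{C}}(-p)$ rather than $+T_{\mathrm{C}}(-p)$. As a safety net, one can forgo the period-function machinery entirely and prove the Corollary by substituting the $q$-expansion $f_{\mathrm{C}}(x) = \sum_{n \equiv 1 (24),\, n > 0} T_{\mathrm{C}}(n)\, e^{2\pi i n x/24}$ into the defining sum for $T_{p}^{\infty}$ and repeating, mutatis mutandis, the multiplicativity-and-explicit-formula case analysis of $T_{\mathrm{C}}(p^{e})$ carried out in the Maass-side proof.
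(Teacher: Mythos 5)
Your proposal follows the paper's route essentially verbatim: for $p\equiv 1\pmod 6$ the corollary is read off directly from the commuting square of Theorem~\ref{comp} together with the Maass-side eigenform theorem, and for $p\equiv -1\pmod 6$ your ``reflection-compatibility lemma'' is exactly the paper's Lemma $I^{\infty}\circ\varPsi_{\nu}=-\varPsi_{\nu^{-1}}\circ I$, proved there by the substitution $\tau'=-\overline{\tau}$ in the Green's-form integral using $R_{z}(-\overline{\tau})=R_{-z}(\tau)$. The only piece you defer --- pinning down $\epsilon=-1$, which produces the $-T_{\mathrm{C}}(-p)$ eigenvalue --- is precisely what that computation supplies, so your outline is correct and matches the paper's proof.
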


\begin{proof}
In case of  $p\equiv 1\Mod{6}$, the proof is almost direct. By the Theorem \ref{comp}, we have
\begin{align*}
T_{p}^{\infty}f_{\mathrm{C}}(z) &= (T_{\alpha_{p}, \nu_{\mathrm{C}}, \nu_{\mathrm{C}}^{p}}^{\infty} \circ\varPsi_{\nu_{\mathrm{C}}} u_{\mathrm{C}})(z) \\
&= (\varPsi_{\nu_{\mathrm{C}}^{p}}\circ T_{\alpha_{p}, \nu_{\mathrm{C}}, \nu_{\mathrm{C}}^{p}}u_{\mathrm{C}})(z) \\
&= (\varPsi_{\nu_{\mathrm{C}}^{p}}\circ T_{p}u_{\mathrm{C}})(z) \\
&=\begin{cases}
0 & \iif p\not\equiv 1\Mod{24} \\
T(p)\varPsi_{\nu_{\mathrm{C}}^{p}}u_{\mathrm{C}}(z) = T(p)f_{\mathrm{C}}(z) & \iif p\equiv 1\Mod{24}. 
\end{cases}
\end{align*}
We have to deal with the case $p\equiv -1\Mod{6}$ carefully. For any function $u:\mathcal{H}\to \mathbb{C}$, define an involution $I$ acts on $u$ as $(I\circ u)(z):= u(-\overline{z})$. Similarly, for any function $f:(\mathbb{C}\backslash \mathbb{R})\cup \mathbb{Q}\to \mathbb{C}$,  define another involution $I^{\infty}$ acts on $f$ as $(I^{\infty} f)(z):= f(-z)$. Then we can write $T_{p}= I\circ T_{\alpha_{p}, \nu_{\mathrm{C}}, \nu_{\mathrm{C}}^{p}}$ and $T_{p}^{\infty}= I^{\infty}\circ T^{\infty}_{\alpha_{p}, \nu_{\mathrm{C}}, \nu_{\mathrm{C}}^{p}}$  for $p\equiv -1\Mod{6}$. 
We need a following lemma. 
\begin{lemma}
For any congruence subgroup $\varGamma$ and multiplier system $\nu:\varGamma\to \mathbb{S}^{1}$, we have $I^{\infty}\circ \varPsi_{\nu} = -\varPsi_{\nu^{-1}}\circ I$. 
\end{lemma}
\label{invol}
\begin{proof}
Let $u\in \mathcal{M}_{1/2}(\varGamma, \nu)$. By definition, we have
\begin{align*}
(I^{\infty}\circ \varPsi_{\nu} u)(z) =(\varPsi_{\nu}u)(-z)= \begin{cases}
-\int_{-\overline{z}}^{i\infty} [R_{-z}(\tau)^{1/2}, u(\tau)] & \iif z\in\mathcal{H} \\
\int_{-z}^{i\infty} [u(\tau), R_{-z}(\tau)^{1/2}] & \iif z\in\mathcal{H}^{-}. 
\end{cases}
\end{align*}
We also have
\begin{align*}
(\varPsi_{\nu^{-1}}\circ I u)(z) = \begin{cases}
\int_{z}^{i\infty} [u(-\overline{\tau}), R_{z}(\tau)^{1/2}] & \iif z\in\mathcal{H} \\
-\int_{\overline{z}}^{i\infty} [R_{z}(\tau)^{1/2}, u(-\overline{\tau})] & \iif z\in \mathcal{H}^{-}. 
\end{cases}
\end{align*}
Now for convenience, let $u_{0}(\tau) = u(-\overline{\tau})$. Then for $z\in \hh$, 
\begin{align*}
\int_{z}^{i\infty} [u_{0}(\tau), R_{z}(\tau)^{1/2}] = \int_{z}^{i\infty} \frac{\partial u_{0}(\tau)}{\partial\tau}R_{z}(\tau)^{1/2}d\tau + u_{0}(\tau) \frac{\partial R_{z}(\tau)^{1/2}}{\partial \overline{\tau}} d\overline{\tau}.
\end{align*}
Using the change of variable $\tau' = -\overline{\tau}$ and the identity $R_{z}(-\overline{\tau}) = R_{-z}(\tau)$, we have
\begin{align*}
&\int_{-\overline{z}}^{i\infty} \frac{\partial u_{0}(-\overline{\tau'})}{\partial \overline{\tau'}} \frac{\partial \overline{\tau'}}{\partial \tau} R_{z}\left(-\overline{\tau'}\right)^{1/2}\left(-d\overline{\tau'}\right) + u_{0}\left(-\overline{\tau'}\right) \frac{\partial R_{z}(-\overline{\tau'})}{\partial \tau'}\frac{\partial \tau'}{\partial \overline{\tau}} \left(-d\tau'\right) \\
&=\int_{-\overline{z}}^{i\infty} \frac{\partial u(\tau')}{\partial \overline{\tau'}} R_{-z}(\tau')^{1/2}d\overline{\tau'} + u(\tau) \frac{\partial (R_{-z}(\tau')^{1/2})}{\partial \tau'} d\tau' \\
&= \int_{-\overline{z}}^{i\infty} [R_{-z}(\tau')^{1/2}, u(\tau')] \\
&= -(I^{\infty}\circ \varPsi_{\nu} u)(z).
\end{align*}
We can prove the case when $z\in \mathcal{H}^{-}$ similarly. 
\end{proof}
Now assume $p \equiv -1\Mod{6}$. Then by the Lemma \ref{invol},  
\begin{align*}
T_{p}f_{\mathrm{C}}(z)& = (I^{\infty}\circ T_{\alpha_{p}, \nu_{\mathrm{C}}, \nu_{\mathrm{C}}^{p}}^{\infty}\circ\varPsi_{\nu_{\mathrm{C}}}u_{\mathrm{C}})(z) \\
&= (I^{\infty}\circ \varPsi_{\nu_{\mathrm{C}}^{p}}\circ T_{\alpha_{p}, \nu_{\mathrm{C}}, \nu_{\mathrm{C}}^{p}}u_{\mathrm{C}})(z) \\
&= -(\varPsi_{\nu_{\mathrm{C}}^{-p}}\circ I\circ T_{\alpha_{p}, \nu_{\mathrm{C}}, \nu_{\mathrm{C}}^{p}}u_{\mathrm{C}})(z) \\
&= -(\varPsi_{\nu_{\mathrm{C}}^{-p}}\circ T_{p}u_{\mathrm{C}})(z) \\
&= \begin{cases}
0 &\iif  p\not \equiv -1 \Mod{24} \\
-T(-p)\varPsi_{\nu_{\mathrm{C}}^{-p}}u_{\mathrm{C}}(z) =-T(-p)f_{\mathrm{C}}(z)& \iif p\equiv -1\Mod{24}.
\end{cases}
\end{align*}
\end{proof}

Similarly, we can prove that Li-Ngo-Rhoades' quantum modular form is also a Hecke eigenform. 

\begin{corollary}
Define the Hecke operator $T_{p}^{\infty}:\mathcal{Q}_{1}(\varGamma_{0}(4), \nu_{\mathrm{L}})\to \mathcal{Q}_{1}(\varGamma_{0}(4), \nu_{\mathrm{L}}^{\pm p})$ by 
$$
T_{p}^{\infty} f(z) := T_{\alpha_{p}, \nu_{\mathrm{L}}, \nu_{\mathrm{L}}^{p}}^{\infty}f(z) =  f(pz)+\frac{1}{p}\sum_{j=0}^{p-1}\zeta_{8}^{-pj}f\left(\frac{z+j}{p}\right)
$$
for $p\equiv 1\Mod{4}$ and 
$$
T_{p}^{\infty}f(z) := T_{\alpha_{p}, \nu_{\mathrm{L}}, \nu_{\mathrm{L}}^{p}}^{\infty} f(-z)= f(-pz) + \frac{1}{p}\sum_{j=0}^{p-1} \zeta_{8}^{-pj} f\left(\frac{-z+j}{p}\right)
$$
for $p\equiv -1\Mod{4}$. Then $f_{\mathrm{L}}:S_{0}\cup S_{\infty} \to \mathbb{C}$ is a Hecke eigenform, i.e. 
$$
T_{p}^{\infty}f_{\mathrm{L}}(x) = \begin{cases} 0 & \iif  p\not\equiv \pm1\Mod{8} \\ \pm T_{\mathrm{L}}(\pm p)f_{\mathrm{L}}(x) &  \iif p\equiv \pm1\Mod{8} \end{cases}
$$
for any prime $p\geq 3$ and $x\in S_{0}\cup S_{\infty} $.
\end{corollary}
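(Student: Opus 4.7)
The plan is to mimic exactly the proof of the preceding corollary for $f_{\mathrm{C}}$, with only cosmetic changes to account for the level $\varGamma_{0}(4)$, the multiplier $\nu_{\mathrm{L}}$, the eighth root of unity in place of $\zeta_{24}$, and the fact that $c_{\nu_{\mathrm{L}},\nu_{\mathrm{L}}^{p}}(\beta_{\infty})=1$ rather than $(-1)^{(p^{2}-1)/24}$. The key inputs are (i) Theorem \ref{comp}, giving the Hecke equivariance of the map $\varPsi_{\nu}\colon u \mapsto f$, and (ii) Theorem 3.2 (the Li-Ngo-Rhoades analogue just proved), stating that $T_{p}u_{\mathrm{L}}$ equals $T_{\mathrm{L}}(\pm p)u_{\mathrm{L}}$ when $p\equiv \pm 1\Mod{8}$ and vanishes otherwise. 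Since the coset representatives $\beta_{j}$ for $\varGamma_{0}(4)\backslash\varGamma_{0}(4)\alpha_{p}\varGamma_{0}(4)$ all fix $i\infty$, the hypothesis of Theorem \ref{comp} is met, so the diagram there applies.

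For the easy case $p\equiv 1\Mod{4}$, I would simply compute
$$
T_{p}^{\infty}f_{\mathrm{L}}(z) = (T_{\alpha_{p},\nu_{\mathrm{L}},\nu_{\mathrm{L}}^{p}}^{\infty}\circ \varPsi_{\nu_{\mathrm{L}}})(u_{\mathrm{L}})(z) = (\varPsi_{\nu_{\mathrm{L}}^{p}}\circ T_{\alpha_{p},\nu_{\mathrm{L}},\nu_{\mathrm{L}}^{p}})(u_{\mathrm{L}})(z) = \varPsi_{\nu_{\mathrm{L}}^{p}}(T_{p}u_{\mathrm{L}})(z),
$$
and then plug in Theorem 3.2 to get $0$ or $T_{\mathrm{L}}(p)f_{\mathrm{L}}(z)$.

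For the twisted case $p\equiv -1\Mod{4}$, the extra ingredient is the involution identity $I^{\infty}\circ \varPsi_{\nu}=-\varPsi_{\nu^{-1}}\circ I$ established in Lemma \ref{invol} during the Cohen proof; this lemma was stated for an arbitrary congruence subgroup and arbitrary multiplier, so it applies verbatim to $(\varGamma_{0}(4),\nu_{\mathrm{L}})$. Writing $T_{p}=I\circ T_{\alpha_{p},\nu_{\mathrm{L}},\nu_{\mathrm{L}}^{p}}$ and $T_{p}^{\infty}=I^{\infty}\circ T_{\alpha_{p},\nu_{\mathrm{L}},\nu_{\mathrm{L}}^{p}}^{\infty}$, the same chain of equalities as in the Cohen case gives
$$
T_{p}^{\infty}f_{\mathrm{L}}(z) = -\varPsi_{\nu_{\mathrm{L}}^{-p}}(T_{p}u_{\mathrm{L}})(z),
$$
from which Theorem 3.2 yields $0$ or $-T_{\mathrm{L}}(-p)f_{\mathrm{L}}(z)$ according to whether $p\equiv -1\Mod{8}$.

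The only non-cosmetic point worth double-checking is the domain: because $u_{\mathrm{L}}$ fails to vanish at the cusp $1/2$, $f_{\mathrm{L}}$ is only defined on $S_{0}\cup S_{\infty}$. I would verify that the Hecke operator $T_{p}^{\infty}$ preserves this subset of $\mathbb{Q}$; this reduces to checking that each matrix $\beta_{j}$ (and its composition with $z\mapsto -z$ when $p\equiv -1\Mod{4}$) carries $S_{0}\cup S_{\infty}$ into itself, which follows since the $\beta_{j}$ lie in $\mathrm{GL}_{2}^{+}(\mathbb{Q})$ and $\varGamma_{0}(4)$-equivalence of cusps is preserved under conjugation-compatible slash actions. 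This is the only substantive bookkeeping; the rest is a direct transcription of the Cohen corollary's proof.
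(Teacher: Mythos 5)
Your proposal is correct and is essentially the paper's own argument: the paper prints no proof for this corollary beyond the remark that it follows ``similarly'' to the Cohen case, and your transcription --- Hecke equivariance via Theorem \ref{comp}, the eigenform theorem for $u_{\mathrm{L}}$, and the involution identity $I^{\infty}\circ\varPsi_{\nu}=-\varPsi_{\nu^{-1}}\circ I$ for $p\equiv -1\Mod{4}$ --- is exactly that intended proof. Your additional check that the $\beta_{j}$ carry $S_{0}\cup S_{\infty}$ into itself (which holds because the reduced denominator of $\beta_{j}x$ differs from that of $x$ only by a factor of the odd prime $p$, so it never becomes $\equiv 2\Mod{4}$) addresses a point the paper silently skips and is worth retaining.
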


Actually, we can \emph{see} this using MATLAB. We can plot graphs of the function
$$
h_{L}(x) = f_{\mathrm{L}}(x) - \zeta_{8}^{-1} \frac{1}{|4x+1|}f_{\mathrm{L}}\left(\frac{x}{4x+1}\right)
$$
and
$$
H_{L}(x) = T_{7}^{\infty}f_{\mathrm{L}}(x) - \zeta_{8}^{-1} \frac{1}{|4x+1|} T_{7}^{\infty} f_{\mathrm{L}}\left(\frac{x}{4x+1}\right)
$$
which are both smooth on $\mathbb{R}\backslash \{-1/4\}$. Since $T_{7}^{\infty} f_{\mathrm{L}}(x) = -T_{\mathrm{L}}(-7)f_{\mathrm{L}}(x) = 2f_{\mathrm{L}}(x)$, we should have $H_{L}(x) =2h_{L}(x)$, and we can observe this from the following graph.
Here pink (resp. green) graph shows the real (resp. imaginary) part of $h_{L}(x)$, and red (resp. blue)  graph shows the real (resp. imaginary) part of $H_{L}(x)$. 

\begin{figure}[h]
\centering
\includegraphics[width=0.7\textwidth]{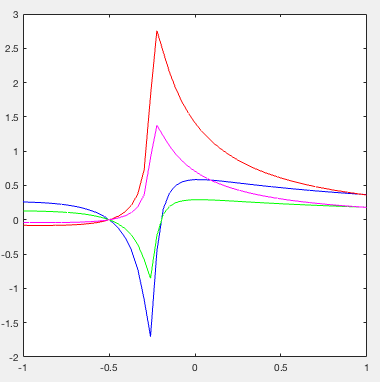}
\caption{Real and imaginary parts of $h_{L}(x)$ and $H_{L}(x)$}
\end{figure}

\subsection{New identities for $T_{\mathrm{C}}(p)$ and $T_{\mathrm{L}}(p)$}

Since $f_{\mathrm{C}}(x)$ is a Hecke eigenform, we can get a new identity for the coefficient $T_{\mathrm{C}}(p)$ by considering the value $T_{p}^{\infty}f_{\mathrm{C}}(0)$.

\begin{corollary} For any prime $p\geq 5$, we have
$$
\pm T_{\mathrm{C}}(\pm p) = (-1)^{\frac{p^{2}-1}{24}}+\frac{1}{2p} \sum_{j=0}^{p-1}\sum_{n=0}^{p-1} (-1)^{n}\zeta_{p}^{\left(n+1-\frac{p^{2}-1}{24}\right)j}(1-\zeta_{p}^{j})\cdots(1-\zeta_{p}^{nj})
$$
where $p\equiv \pm 1\Mod{6}$. 
\end{corollary}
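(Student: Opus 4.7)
The plan is to specialize the eigenform relation from the previous corollary at $x=0$ and extract the identity from the finite-sum expansion of $\sigma(q)$ at roots of unity. Since $f_{\mathrm{C}}$ is a Hecke eigenform with $T_{p}^{\infty}f_{\mathrm{C}}=\pm T_{\mathrm{C}}(\pm p)f_{\mathrm{C}}$ (signs according as $p\equiv \pm 1\Mod{6}$), evaluating the explicit definition of $T_{p}^{\infty}$ at $x=0$ gives
$$
\pm T_{\mathrm{C}}(\pm p)\cdot f_{\mathrm{C}}(0) = (-1)^{(p^{2}-1)/24}f_{\mathrm{C}}(0) + \frac{1}{p}\sum_{j=0}^{p-1}\zeta_{24}^{-pj}f_{\mathrm{C}}(j/p),
$$
where for $p\equiv -1\Mod{6}$ the arguments $-px$ and $(-x+j)/p$ collapse to $0$ and $j/p$ at $x=0$. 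So the whole task is to evaluate $f_{\mathrm{C}}(0)$ and $f_{\mathrm{C}}(j/p)$ explicitly.

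The key input is the Andrews identity
$$\sigma(q) = 1 + \sum_{n=0}^{\infty}(-1)^{n}q^{n+1}(1-q)(1-q^{2})\cdots(1-q^{n}),$$
recalled in Section 2.3, which makes $\sigma(q)$ a finite sum at any root of unity. Setting $q=1$ kills all $n\geq 1$ terms, yielding $f_{\mathrm{C}}(0)=\sigma(1)=2$. For $1\leq j\leq p-1$, substituting $q=\zeta_{p}^{j}$ truncates the sum at $n=p-1$ because $(1-\zeta_{p}^{pj})=0$ appears once $n\geq p$. Writing the branch $q^{1/24}=\zeta_{24p}^{j}$ and combining with the prefactor gives $\zeta_{24}^{-pj}q^{1/24}=\zeta_{p}^{-Aj}$ with $A:=(p^{2}-1)/24\in\zz$. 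Summing the contribution of the leading "$1$" of the $\sigma$-expansion over $j$ produces $\sum_{j=0}^{p-1}\zeta_{p}^{-Aj}$, which vanishes since $p\nmid A$ (otherwise $p\mid p^{2}-1$, forcing $p\mid 1$); the $j=0$ term cancels nicely with the $n=0$ contribution at that same $j$, so one gets exactly $2$ on both sides of this piece.

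What remains on the right is precisely
$$
\frac{1}{p}\sum_{j=0}^{p-1}\sum_{n=0}^{p-1}(-1)^{n}\zeta_{p}^{(n+1-A)j}(1-\zeta_{p}^{j})\cdots(1-\zeta_{p}^{nj}),
$$
and dividing through by $f_{\mathrm{C}}(0)=2$ gives the claimed formula, uniformly in the two sign regimes. The computation is essentially mechanical after invoking the Andrews identity; I expect the main obstacle to be the careful bookkeeping of the $24$th-root-of-unity prefactor $\zeta_{24}^{-pj}$ against the fractional power $q^{1/24}$ in the definition \eqref{fCdefn} of $f_{\mathrm{C}}$, together with verifying that the $j=0$ term of the double sum and the constant $\sum_{j}\zeta_{p}^{-Aj}=0$ are handled consistently so that the leading $(-1)^{A}$ survives with the correct coefficient.
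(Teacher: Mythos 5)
Your proposal is correct and follows essentially the same route as the paper: evaluate $T_{p}^{\infty}f_{\mathrm{C}}$ at $x=0$, use $f_{\mathrm{C}}(0)=2$ and the Andrews finite-sum expansion of $\sigma$ at $q=\zeta_{p}^{j}$, combine $\zeta_{24}^{-pj}\zeta_{24p}^{j}=\zeta_{p}^{-Aj}$ with $A=\frac{p^{2}-1}{24}$, and discard $\sum_{j=0}^{p-1}\zeta_{p}^{-Aj}=0$ since $p\nmid A$. The only cosmetic difference is that the paper needs no special cancellation at $j=0$: it simply keeps the $j=0$ terms inside both pieces (the geometric sum, which vanishes as a whole, and the double sum, where $j=0$ contributes $\frac{1}{2p}$), so your bookkeeping worry resolves itself without affecting correctness.
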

\begin{proof}
From $T_{p}^{\infty} f_{\mathrm{C}}(0) = \pm T_{\mathrm{C}}(\pm p) f_{\mathrm{C}}(0) = \pm 2T_{\mathrm{C}}(\pm p )$ with $f_{\mathrm{C}}(0)=2$, we have
\begin{align*}
\pm T_{\mathrm{C}}(\pm p) &= \frac{1}{2} \left(2(-1)^{\frac{p^{2}-1}{24}} + \frac{1}{p}\sum_{j=0}^{p-1}\zeta_{24}^{-pj} f_{\mathrm{C}}\left(\frac{j}{p}\right)\right) \\
&=(-1)^{\frac{p^{2}-1}{24}} + \frac{1}{2p} \sum_{j=0}^{p-1} \zeta_{24}^{-pj} \zeta_{24p}^{j}\left(1+\sum_{n=0}^{p-1}(-1)^{n}\zeta_{p}^{(n+1)j}(1-\zeta_{p}^{j})\cdots(1-\zeta_{p}^{nj})\right) \\
&= (-1)^{\frac{p^{2}-1}{24}}+\frac{1}{2p}\sum_{j=0}^{p-1} \zeta_{p}^{-\frac{p^{2}-1}{24}j} + \frac{1}{2p}\sum_{j=0}^{p-1}\sum_{n=0}^{p-1}(-1)^{n}\zeta_{p}^{(n+1-\frac{p^{2}-1}{24})j}(1-\zeta_{p}^{j})\cdots(1-\zeta_{p}^{nj})\\
&= (-1)^{\frac{p^{2}-1}{24}}+ \frac{1}{2p}\sum_{j=0}^{p-1}\sum_{n=0}^{p-1}(-1)^{n}\zeta_{p}^{(n+1-\frac{p^{2}-1}{24})j}(1-\zeta_{p}^{j})\cdots(1-\zeta_{p}^{nj}).
\end{align*}
Here we use $\sum_{j=0}^{p-1}\zeta_{p}^{-\frac{p^{2}-1}{24}j} =0$, which follows from $p\nmid \left(\frac{p^{2}-1}{24}\right)$. 
\end{proof}
For example, for $p=73, 97$, we have $T_{\mathrm{C}}(73)=2, T_{\mathrm{C}}(97)=-2$ and  we have
\begin{align*}
2&=1+\frac{1}{146}\sum_{j=0}^{72}\sum_{n=0}^{72} (-1)^{n}\zeta_{73}^{(n-221)j} (1-\zeta_{73}^{j})\cdots (1-\zeta_{73}^{nj}) \\
-2&=1+\frac{1}{194}\sum_{j=0}^{96}\sum_{n=0}^{96} (-1)^{n}\zeta_{97}^{(n-391)j} (1-\zeta_{97}^{j})\cdots (1-\zeta_{97}^{nj})
\end{align*}

We can apply the same argument for the Li-Ngo-Rhoades' example. If we consider the value $T_{p}^{\infty} f_{\mathrm{L}}(0)$, we obtain the following formula of $T_{\mathrm{L}}(p)$. 
\begin{corollary} For any prime $p\geq 3$, we have
$$
\pm T_{\mathrm{L}}(\pm p) = 1 + \frac{1}{p}\sum_{j=0}^{p-1} \zeta_{p}^{-\frac{p^{2}-1}{8}j} \sum_{n=0}^{(p-1)/2} \frac{(1-\zeta_{p}^{j})(1-\zeta_{p}^{3j})\cdots (1-\zeta_{p}^{(2n-1)j})(-\zeta_{p}^{j})^{n}}{(1+\zeta_{p}^{2j})(1+\zeta_{p}^{4j})\cdots (1+\zeta_{p}^{2nj})}
$$
where $p\equiv \pm 1\Mod{4}$.  
\end{corollary}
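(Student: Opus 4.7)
The plan is to mirror the derivation of Corollary 3.3 by specializing the Hecke eigenvalue identity
\[
T_p^{\infty} f_{\mathrm{L}}(x) = \pm T_{\mathrm{L}}(\pm p)\, f_{\mathrm{L}}(x)
\]
at the point $x = 0 \in S_0$. In the residue classes $p \equiv \pm 3 \pmod 8$ the Hecke operator kills $f_{\mathrm{L}}$ by the previous corollary, and the explicit formula \eqref{TLform} forces $T_{\mathrm{L}}(\pm p) = 0$ as well, so the claimed identity holds automatically in those cases; the only substantive case is $p \equiv \pm 1 \pmod 8$.

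First I compute $f_{\mathrm{L}}(0) = 1$. Using the finite-sum identity
\[
W_1(q) = \sum_{n=0}^{\infty} \frac{(q;q^2)_n (-q)^n}{(-q^2;q^2)_n}
\]
at $q = 1$, only the $n = 0$ summand survives (the factor $1 - q$ annihilates the rest), so $W_1(1) = 1$ and hence $f_{\mathrm{L}}(0) = 1$. Although the operator $T_p^{\infty}$ has different formulas for $p \equiv \pm 1 \pmod 4$, both collapse at $z = 0$ (since $-\overline{0} = 0$) to
\[
T_p^{\infty} f_{\mathrm{L}}(0) = f_{\mathrm{L}}(0) + \frac{1}{p}\sum_{j=0}^{p-1}\zeta_8^{-pj}\, f_{\mathrm{L}}\!\left(\tfrac{j}{p}\right).
\]

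The core computation is to evaluate $f_{\mathrm{L}}(j/p)$ for $1 \leq j \leq p-1$. Taking $q = \zeta_p^{\,j}$, a primitive $p$th root of unity, the factor $(q;q^2)_n = \prod_{k=1}^{n}(1 - \zeta_p^{(2k-1)j})$ first vanishes at $k = (p+1)/2$, so $W_1(\zeta_p^{\,j})$ truncates at $n = (p-1)/2$; the denominators $(1 + \zeta_p^{2kj})$ remain nonzero throughout, because $-1$ is never a $p$th root of unity when $p$ is odd. Combined with $q^{1/8} = \zeta_{8p}^{\,j}$ and the character simplification
\[
\zeta_8^{-pj}\,\zeta_{8p}^{\,j} = \zeta_{8p}^{\,(1-p^2)j} = \zeta_p^{-\frac{p^2-1}{8}j}
\]
(valid since $8 \mid p^2 - 1$ for odd $p$), this produces exactly the inner sum in the statement.

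Finally, I reconcile the $j = 0$ contributions. In the sum on the right-hand side of the claim, the $j = 0$ summand equals $1$ (only the $n = 0$ term survives, since $(1 - \zeta_p^{\,0}) = 0$ annihilates $n \geq 1$); with the $\tfrac{1}{p}$ prefactor this matches precisely the $\tfrac{1}{p} f_{\mathrm{L}}(0) = \tfrac{1}{p}$ contribution from the $j = 0$ term of the Hecke operator sum, while the standalone leading $1$ arises from $f_{\mathrm{L}}(0)$ outside the sum. The entire argument is essentially bookkeeping of roots of unity and the finite-sum representation of $W_1$; the conceptual content was already established in the Hecke-eigenvalue corollary proved just above, so no substantive obstacle remains.
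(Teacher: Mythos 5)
Your proposal is correct and follows exactly the route the paper intends: the paper gives no written proof of this corollary beyond the remark that one evaluates $T_{p}^{\infty}f_{\mathrm{L}}(0)$ in parallel with the Cohen case, and your computation (with $f_{\mathrm{L}}(0)=W_{1}(1)=1$, the truncation of $W_{1}(\zeta_{p}^{j})$ at $n=(p-1)/2$, and the simplification $\zeta_{8}^{-pj}\zeta_{8p}^{j}=\zeta_{p}^{-\frac{p^{2}-1}{8}j}$) fills in precisely those details. The bookkeeping of the $j=0$ term and the degenerate cases $p\not\equiv\pm1\pmod 8$ is also handled correctly.
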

For example, when $p=7,31$, we have $T_{\mathrm{L}}(-7) = -2,  T_{\mathrm{L}}(-31) = 2$ and 
\begin{align*}
2&= 1+ \frac{1}{7}\sum_{j=0}^{6} \zeta_{7}^{-6j} \sum_{n=0}^{3} \frac{(1-\zeta_{7}^{j})(1-\zeta_{7}^{3j})\cdots (1-\zeta_{7}^{(2n-1)j})(-\zeta_{7}^{j})^{n}}{(1+\zeta_{7}^{2j})(1+\zeta_{7}^{4j})\cdots (1+\zeta_{7}^{2nj})} \\
-2&= 1 + \frac{1}{31}\sum_{j=0}^{30} \zeta_{31}^{-120j} \sum_{n=0}^{15} \frac{(1-\zeta_{31}^{j})(1-\zeta_{31}^{3j})\cdots (1-\zeta_{31}^{(2n-1)j})(-\zeta_{31}^{j})^{n}}{(1+\zeta_{31}^{2j})(1+\zeta_{31}^{4j})\cdots (1+\zeta_{31}^{2nj})} 
\end{align*}





\end{document}